\def\newrefformat#1#2{%
  \@namedef{pr@#1}##1{#2}}
\def\fref#1{\@prettyref#1:}
\def\@prettyref#1:#2:{%
  \expandafter\ifx\csname pr@#1\endcsname\relax%
    \PackageWarning{prettyref}{Reference format #1\space undefined}%
    \ref{#1:#2}%
  \else%
    \csname pr@#1\endcsname{#1:#2}%
  \fi%
}
\newcommand{\mynewthm}[3][dummythm]{%
  \newtheorem{#2}[#1]{#3}%
  \newrefformat{#2}{#3~\ref{##1}}%
}
\theoremstyle{plain}
\theoremstyle{definition}
\theoremstyle{remark}
\newcounter{cycprfcnt}
{\begin{list}{\PackageWarning{pezz}{Label required for cycprf}}%
  {%
    \setcounter{cycprfcnt}{1}
    \setlength{\itemindent}{0.5\leftmargin}%
    \setlength{\leftmargin}{0pt}%
  }%
}%
{\qedhere\end{list}}%
\def\indsym#1#2{%
  \setbox0=\hbox{$\m@th#1x$}%
  \kern\wd0%
  \hbox to 0pt{\hss$\m@th#1\mid$\hbox to 0pt{$\m@th#1^{#2}$}\hss}%
  \lower.9\ht0\hbox to 0pt{\hss$\m@th#1\smile$\hss}%
  \kern\wd0}
\def\nindsym#1#2{%
  \setbox0=\hbox{$\m@th#1x$}%
  \kern\wd0%
  \hbox to 0pt{\hss$\m@th#1\not$\kern1.4\wd0\hss}
  \hbox to 0pt{\hss$\m@th#1\mid$\hbox to 0pt{$\m@th#1^{#2}$}\hss}%
  \lower.9\ht0\hbox to 0pt{\hss$\m@th#1\smile$\hss}%
  \kern\wd0}
\def\dotminussym#1#2{%
  \setbox0=\hbox{$\m@th#1-$}%
  \kern.5\wd0%
  \hbox to 0pt{\hss\hbox{$\m@th#1-$}\hss}%
  \raise.6\ht0\hbox to 0pt{\hss$\m@th#1.$\hss}%
  \kern.5\wd0}
\newcommand{\dotminus}{\mathbin{\mathpalette\dotminussym{}}}
\renewcommand{\emptyset}{\varnothing}
\DeclareMathOperator{\id}{id}
\newcommand{\cL}{\mathcal{L}}
\newcommand{\cM}{\mathcal{M}}
\newcommand{\cP}{\mathcal{P}}
\newcommand{\cT}{\mathcal{T}}
\newcommand{\bP}{\mathbb{P}}
\newcommand{\setR}{\mathbb{R}}
\newcommand{\setQ}{\mathbb{Q}}
\newcommand{\half}[1][1]{\frac{#1}{2}}
\theoremstyle{plain}
\newtheorem{theorem}{Theorem}[section]
\newtheorem{proposition}[theorem]{Proposition}
\newtheorem{lemma}[theorem]{Lemma}
\newtheorem{corollary}[theorem]{Corollary}
\theoremstyle{definition} 
\newtheorem{definition}[theorem]{Definition}
\newtheorem{notation}[theorem]{Notation}
\newtheorem{remark}[theorem]{Remark}
\renewcommand{\phi}{\varphi}
\newcommand{\sci}{\ensuremath{\sup\bigwedge\inf}}
\newcommand{\sub}{\operatorname{sub}}
\renewcommand{\varepsilon}{\epsilon}
\newcommand{\PMS}{\cP(\cM,\Sigma)}
\title{Model theoretic forcing in analysis}
\author{Ita\"\i{} \textsc{Ben Yaacov}}
\address{Ita\"\i{} \textsc{Ben Yaacov} \\
  Universit\'e Claude Bernard -- Lyon 1 \\
  Institut Camille Jordan \\
  43 boulevard du 11 novembre 1918 \\
  69622 Villeurbanne Cedex \\
  France}
\urladdr{http\string://math.univ-lyon1.fr/\textasciitilde begnac/}
\author{Jos\'e Iovino}
\address{Jos\'e Iovino\\
	Department of Mathematics\\
  The University of Texas at San Antonio\\
  One UTSA Circle
  San Antonio, TX 78249\\
  USA}
\email{iovino@math.utsa.edu}
\thanks{First author partially supported by NSF grant DMS-0500172.}
\date{\today}
\begin{document}

\begin{abstract}
We present a framework for model theoretic forcing in a non-first-order context, and present some applications of this framework to Banach space theory.
\end{abstract}

\maketitle

\section*{Introduction}

In this paper we introduce a framework of model theoretic forcing for
metric structures, i.e., structures based on metric spaces.
We use the language of infinitary continuous logic, which we define
below.
This is a variant of finitary continuous logic which is exposed in
\cite{BenYaacov-Usvyatsov:localstability} or
\cite{BenYaacov-Berenstein-Henson-Usvyatsov:metricstructures}.

The model theoretic forcing framework introduced here is analogous to that developed by Keisler~\cite{Keisler:1973} for structures of the form considered in first-order model theory.

The paper concludes with an application to separable quotients of Banach spaces.
The long standing Separable Quotient Problem is whether for every nonseparable Banach space $X$ there exists a operator $T\colon X\to Y$ such that $T(X)$ is a separable, infinite dimensional Banach space. We prove the following result (Theorem~\ref{T:separable quotient}): If $X$ is an infinite dimensional Banach space and $T\colon X \to Y$ is a surjective operator with infinite dimensional kernel, then there exist Banach spaces $\Hat{X},\Hat{Y}$ and a surjective operator $\Hat{T}\colon\Hat{X}\to\Hat{Y}$ such that
\begin{enumerate}
\item
$\Hat{X}$ has density character $\omega_1$,
\item
The range of $\Hat{T}$ is separable,
\item
$(X,Y,T)$ and $(\Hat{X},\Hat{Y},\Hat{T})$ are elementarily equivalent as metric structures.
\end{enumerate}

The paper is organized as follows. In \fref{sec:preliminaries}
we introduce the syntax that will be used in the paper. In \fref{sec:forcing}, we introduce model theoretic forcing for metric structures. 
\fref{sec:special forcing properties} we focus our attention on two particular forcing properties. These properties are used in \fref{sec:omitting types} to prove the general Omitting Types Theorem. The last section, \fref{sec:separable quotients}, is devoted to the aforementioned application to separable quotients.

For the exposition of the material we focus on one-sorted languages. However, as the reader will notice, the results presented here hold true, mutatis mutandi, for multi-sorted contexts. In fact, the structures used in the last section are multi-sorted. 

The authors are grateful to Yi Zhang for his encouragement and patience.

\section{Preliminaries}
\label{sec:preliminaries}

Recall that if
$f\colon (X,d) \to (X',d')$ is a mapping between two metric
spaces, then $f$ is uniformly continuous if and only if 
there exists a mapping 
$\delta\colon (0,\infty)\to (0,\infty]$ such that for all $x,y \in X$ and
$\varepsilon > 0$,
\begin{gather}
  \label{eq:UnifCont}
  d(x,y) < \delta(\varepsilon) \Longrightarrow d'(f(x),f(y)) \leq \varepsilon.
\end{gather}
If \eqref{eq:UnifCont} holds, we say that $\delta$ is a \emph{uniform continuity modulus} and that $f$ \emph{respects} $\delta$. The choice of strict and weak inequalities here is so that the
property of respecting $\delta$ be preserved under certain important constructions (e.g., completions and 
ultraproducts).

Let $\delta'\colon (0,\infty) \to (0,\infty]$ be any mapping, and define:
\begin{gather}
  \label{eq:GoodContMod}
  \delta(\varepsilon) = \sup \{\delta'(\varepsilon')\mid 0<\varepsilon'<\varepsilon\}.
\end{gather}
Then $\delta$ and $\delta'$ are equivalent as uniform continuity moduli,
in the sense that a function $f$
respects $\delta$ if and only if it respects $\delta'$.
In addition we have
\begin{gather}
  \label{eq:GoodContMod2}
  \delta(\varepsilon) = \sup \{\delta(\varepsilon')\mid 0<\varepsilon'<\varepsilon\},
\end{gather}
i.e.,
$\delta$ is increasing and continuous on the left.
As a consequence, \fref{eq:UnifCont} is equivalent to the apparently
stronger version:
\begin{gather}
  \label{eq:UnifContStrict}
  d(x,y) < \delta(\varepsilon) \Longrightarrow d'(f(x),f(y)) < \varepsilon.
\end{gather}
From this point on, when referring to a uniform continuity modulus $\delta$, we
mean one that satisfies \fref{eq:GoodContMod2}.

In this section we introduce infinitary continuous formulas.
For a general text regarding continuous structures and
finitary continuous first order formulas
we refer the reader to
Sections 2 and 3 of~\cite{BenYaacov-Usvyatsov:localstability} or
Sections~2--6
of~\cite{BenYaacov-Berenstein-Henson-Usvyatsov:metricstructures}.

Recall that a continuous signature $\cL$  consists of the following
data:
\begin{itemize}
\item For each $n$, a set of $n$-ary function and predicate symbols.
\item A distinguished binary predicate symbol $d$.
\item For each $n$-ary symbol $s$ and $i<n$, a
  uniform continuity modulus for the $i$th argument denoted $\delta_{s,i}$.
\end{itemize}

A continuous $\cL$-structure is a set $M$ equipped with
interpretations of the symbols of the language:
\begin{itemize}
\item Each $n$-ary function symbol is interpreted by an
  $n$-ary function:
  $$f^M\colon M^n \to M.$$
\item Each $n$-ary predicate symbol is interpreted by a continuous
  $n$-ary predicate:
  $$P^M\colon M^n \to [0,1].$$
\item The interpretation $d^M$ of the distinguished symbol $d$ is a
  complete metric.
\item For each $n$-ary symbol $s$ and $i < n$, the interpretation
  $s^M$, viewed as a function of its $i$th argument,
  respects the uniform continuity modulus $\delta_{s,i}$.
\end{itemize}

It is proved in \cite{BenYaacov-Usvyatsov:localstability} that the
following system of connectives is \emph{full}:
\[
x\mapsto \lnot x, \qquad x\mapsto \frac{x}{2}, \qquad (x,y)\mapsto x\dotminus y := \max(x-y,0)
\]
This means that for every $n \geq 1$,
the family of functions from $[0,1]^n \to [0,1]$ which can be
written using these three operations is dense in the class of all
continuous functions $[0,1]^n \to [0,1]$.
For the purposes of this paper (namely, to simplify the treatment of
forcing, in \fref{sec:forcing}), it is convenient to use the
connective $\dotplus$ instead of $\dotminus$.
Note that this  causes no loss in expressive power, since
$x\dotminus y =\lnot(\lnot x\dotplus y)$.

In this paper we extend the class of first-order continuous formulas by considering formulas that may contain the infinitary connectives $\bigwedge$ and $\bigvee$, where for a set of formulas $\Phi$, $\bigwedge_{\phi\in\Phi} \phi$ and $\bigvee_{\phi\in\Phi} \phi$ stand for $\sup\{\phi\mid\phi\in\Phi\} \phi$ and $\inf\{\phi\mid\phi\in\Phi\}$, respectively. Because of the infinitary nature of this language, in order to form formulas with these connectives, one needs to be
particularly careful about the uniform continuity moduli of the terms
and formulas with respect to each variable, denoted
$\delta_{\tau,x}$ and $\delta_{\varphi,x}$, respectively; thus, we have the following definition.

\begin{definition}
Let $\cL$ be a continuous signature. We define the formulas of
$\cL_{\omega_1,\omega}$. Simultaneously, for each variable $x$, each term $\tau$ and each formula $\phi$ of $\cL_{\omega_1,\omega}$ we define uniform continuity moduli $\delta_{\tau,x}$ and $\delta_{\varphi,x}$. Both definitions are inductive.
\begin{itemize}
\item A variable is a term, with $\delta_{x,x} = \id$ and
  $\delta_{x,y} = \infty$ for $y\neq x$.
\item If $f$ is an $n$-ary function symbol and $\tau_0,\dots,\tau_{n-1}$ are terms, then $f\tau_0\ldots\tau_{n-1}$ is a term. If $\tau$ is a term of this form,
  \begin{gather*}
    \delta_{\tau,x}(\varepsilon) =
    \sup_{\varepsilon_0+\ldots+\varepsilon_{n-1} < \varepsilon} \min \{\delta_{\tau_i,x}\circ\delta_{f,i}(\varepsilon_i)\mid i < n\}.
  \end{gather*}
  Here we follow the convention that $\delta(\infty) = \infty$. 
\item If $P$ is an $n$-ary predicate symbol and $\tau_0,\dots\tau_{n-1}$ are
  terms, then $P\tau_0\ldots\tau_{n-1}$ is a formula (called an \emph{atomic
    formula}).
  The definition of
  $\delta_{P\tau_0\ldots\tau_{n-1},x}$ is formally identical that of $\delta_{f\tau_0\ldots\tau_{n-1},x}$.
\item If $\varphi$ and $\psi$ are formulas then so are
  $\lnot\varphi$, $\half \varphi$ and $\varphi \dotplus \psi$.
  We have:
  \begin{align*}
    \delta_{\lnot\varphi,x}(\varepsilon) & = \delta_{\varphi,x}(\varepsilon) \\
    \delta_{\half \varphi,x}(\varepsilon) & = \delta_{\varphi,x}(2\varepsilon) \\
    \delta_{\varphi\dotplus \psi,x}(\varepsilon) & = \sup_{\varepsilon_0+\varepsilon_1 < \varepsilon} \min \{\delta_{\varphi,x}(\varepsilon_0),\delta_{\psi,x}(\varepsilon_1)\}.
  \end{align*}
\item Let $\Phi$ be a countable set of formulas in a finite tuple
  of free variables $\bar x$.
  For each variable $x$, let
  $\delta'_{\bigwedge\Phi,x} = \inf_{\varphi\in\Phi} \delta_{\varphi,x}\colon (0,\infty) \to [0,\infty]$.
  If $\delta'_{\bigwedge\Phi,x}(\varepsilon) > 0$ for all $\varepsilon > 0$ and $x \in \bar x$, then
  $\bigwedge\Phi$ is a formula, also denoted $\bigwedge_{\varphi\in\Phi} \varphi$.
  Its uniform continuity moduli are given by
  \[
  \delta_{\bigwedge\Phi,x}(\varepsilon) = \sup\{\delta'_{\bigwedge\Phi,x}(\varepsilon')\colon 0<\varepsilon'<\varepsilon\},
  \] so that \fref{eq:GoodContMod2} is satisfied.
\item If $\varphi$ is a formula and $x$ a variable, then
  $\inf_x \varphi$ is a formula.
  For $y\neq x$ we have $\delta_{\inf_x\varphi,y} = \delta_{\varphi,y}$, while
  $\delta_{\inf_x\varphi,x} = \infty$.
\end{itemize}

\end{definition}

\begin{notation}
  Rather than putting $\bigvee$ and $\sup$ in our language we define them as
  abbreviations:
  \begin{align*}
    \bigvee\Phi & := \lnot\bigwedge_{\varphi\in\Phi}\lnot\varphi \\
    \sup_x \varphi & := \lnot\inf_x\lnot\varphi.
  \end{align*}
\end{notation}

If $M$ is an $\cL$-structure and $\varphi(x_0,\dots,x_{n-1}) \in \cL_{\omega_1,\omega}$,
one constructs the interpretation $\varphi^M\colon M^n \to [0,1]$
in the obvious manner.
By induction on the structure of $\varphi$ one also shows that for each
variable $x$, $\varphi^M$ is uniformly continuous in $x$ respecting
$\delta_{\varphi,x}$.

Finitary continuous first order formulas, as defined in 
\cite{BenYaacov-Usvyatsov:localstability} and
\cite{BenYaacov-Berenstein-Henson-Usvyatsov:metricstructures},
are constructed in the
same manner, with the exclusion of the infinitary connectives $\bigwedge$ and
$\bigvee$ (i.e., only using the connectives $\lnot,\half,\dotplus$, or
equivalently $\lnot,\half,\dotminus$).
We observe that $\varphi\land\psi$ is equivalent to $\varphi \dotminus (\varphi \dotminus \psi)$,
so finitary instances of $\bigwedge$ and $\bigvee$ are allowed there as well.
The set of all such formulas is denoted $\cL_{\omega,\omega}$.

\begin{definition}
Let $\cL$ be a continuous signature and let $\phi$ be an
$\cL_{\omega_1,\omega}$-formula.
The set of \emph{subformulas} of  $\phi$ denoted $\sub(\phi)$, is defined inductively as follows.

\begin{itemize}

\item
If $P$ is a predicate symbol and $\tau_0,\dots\tau_{n-1}$ are terms, then $\sub(P\tau_0\ldots\tau_{n-1})=\{P\tau_0\ldots\tau_{n-1}\}$.

\item
$\sub(\lnot \phi)=\{\lnot\phi\}\cup\sub(\phi)$ and $\sub(\half \phi)=\{\half\phi\}\cup\sub(\phi)$.

\item
$\sub(\phi \dotplus \psi)=\{\,\phi \dotplus \psi\,\}\cup\sub(\phi)\cup\sub(\psi)$.

\item 
$\sub(\bigwedge_{\phi\in\Phi} \phi) = \{\,\bigwedge_{\phi\in\Phi} \phi\,\}\cup \bigcup_{\phi\in \Phi}\sub(\phi)$.

\item 
$\sub(\inf_x\phi) = \{\,\inf_x\phi\,\}\cup \sub(\phi)$.

\end{itemize}

\end{definition}

$\cL_{\omega_1,\omega}$ need not be countable if $\cL$ is countable. Nevertheless, it is often sufficient to work with countable fragments of $\cL_{\omega_1,\omega}$:

\begin{definition}
A \emph{fragment} of $\cL_{\omega_1,\omega}$ is subset of $\cL_{\omega_1,\omega}$
which contains  all atomic formulas and is closed under subformulas and
substitution of terms for free variables.
\end{definition}

\begin{remark}
Every countable subset of $\cL_{\omega_1,\omega}$ is contained in a countable fragment of $\cL_{\omega_1,\omega}$.
\end{remark}


For the next three sections (that is, the rest of the paper minus the last section), $\cL$ will denote a fixed countable continuous signature, and $\cL_A$ will denote a fixed countable fragment of $\cL_{\omega_1,\omega}$. We will let $C = \{c_i \mid i < \omega\}$ be a set of new constant symbols, and $\cL(C) = \cL\cup C$. An $\cL(C)$-structure $M$ will be called \emph{canonical} if the set $\{c_i^M \mid i < \omega\}$ is dense in $M$.

By $\cL_A(C)$ we will denote the smallest countable fragment of $\cL_{\omega_1,\omega}(C)$ that contains $\cL_A$; notice that $\cL_A(C)$ is obtained allowing closing $\cL_A$ under
substitution of constant symbols from $C$ for free variables.

We will also use the following notation:
  \begin{itemize}
  \item The set of all sentences in $\cL_A(C)$ will be denoted
    $\cL_A^s(C)$.
  \item The set of all atomic sentences in $\cL_A(C)$ will be denoted
    $\cL_A^{as}(C)$.
  \item The set of variable-free terms in $\cL(C)$ will be denoted
    $\cT(C)$.
  \end{itemize}

\section{Forcing}
\label{sec:forcing}

\begin{definition}
  A \emph{forcing property} for $\cL_A$ is a triplet
  $(\bP,\leq,f)$ where $(\bP,\leq)$ is a partially ordered set.
  The elements of $\bP$ are called \emph{conditions}.
  For each condition $p$, $f$ assigns a mapping
  $f_p \colon \cL_A^{as}(C) \to [0,1]$ satisfying the following conditions.
  \begin{enumerate}
  \item
    $p \leq q$ implies $f_p \leq f_q$ i.e., $f_p(\varphi) \leq f_q(\varphi)$ for all
    $\varphi \in \cL_A^{as}(C)$.
  \item Given $p\in \bP$, $\varepsilon>0$, $\tau,\sigma \in \cT(C)$, and an atomic
    $\cL(C)$-formula $\varphi(x)$ there are
    $q \leq p$ and $c \in C$ such that:
    \begin{gather*}
    f_q(d(\tau,c)) < \varepsilon,\\
    f_q(d(\tau,\sigma)) < f_p(d(\sigma,\tau)) + \varepsilon,\\
    \intertext{and if $f_p(d(\tau,\sigma)) < \delta_{\varphi,x}(\varepsilon)$,}
    f_q(\varphi(\sigma)) < f_p(\varphi(\tau)) + \varepsilon.
   	\end{gather*}
  \end{enumerate}
\end{definition}

For the rest of this section, $(\bP,\leq,f)$ will denote a fixed forcing property.

\begin{definition}
\label{D:forcing}
  Let $p \in \bP$ be a condition and $\varphi \in \cL_A^s(C)$ a sentence.
  We define $F_p(\varphi) \in [0,1]$ by induction on $\varphi$.
  For $\varphi$ atomic,
  \[
  F_p(\varphi) = f_p(\varphi).\\
  \]
  Otherwise,
  \[
  \begin{array}{lcl}
    F_p(\lnot\varphi) & = & \lnot \inf_{q\leq p} F_q(\varphi) \\
    F_p(\half \varphi) & = & \half F_p(\varphi) \\
    F_p(\varphi \dotplus \psi) & = & F_p(\varphi) \dotplus F_p(\psi) \\
    F_p(\bigwedge\Phi) & = & \inf_{\varphi\in\Phi} F_p(\varphi) \\
    F_p(\inf_x \varphi(x)) & = & \inf_{c\in C} F_p(\varphi(c)).
  \end{array}
  \]
  If $r \in \setR$ and $F_p(\varphi) < r$ we say that $p$ \emph{forces} that
  $\varphi < r$, in symbols $p \Vdash \varphi<r$.
\end{definition}

\begin{remark}
\label{R:forcing inequality properties}
  Let $p \in \bP$ be a condition, $\varphi \in \cL_A^s(C)$ a sentence, and
  $r \in \setR$. Then,
  \[
    \begin{array}{lcl}
    p \Vdash \varphi<r &\Longleftrightarrow & f_p(\varphi) < r, \text{ if $\varphi$ is atomic}\\
    p \Vdash \half \varphi < r & \Longleftrightarrow & p \Vdash \varphi < 2r\\
    p \Vdash \lnot\varphi < r & \Longleftrightarrow & (\exists s > 1-r)(\forall q \leq p)(q \nVdash \varphi < s)\\
    p \Vdash (\varphi \dotplus \psi) < r & \Longleftrightarrow &
      (\exists s)(p \Vdash \varphi < s  \text{ and } p \Vdash \psi < r-s)\\
    p \Vdash \bigwedge\Phi < r & \Longleftrightarrow & (\exists\varphi\in\Phi)(p \Vdash \varphi < r)\\
    p \Vdash \inf_x\varphi(x) < r & \Longleftrightarrow & (\exists c\in C)(p \Vdash \varphi(c) <r).
    \end{array}
  \]
\end{remark}

\begin{remark}
  The forcing relation $\Vdash$ can be defined inductively, without reference to the function $F_p(\phi)$, by the list of equivalences in the preceding remark. One can then define $F_p(\varphi)$ as $\inf\{\,r\in\mathbb{R}\mid p \Vdash \varphi<r \,\}$.
\end{remark}

The following basic properties will be used many times.

\begin{lemma}
  \label{lem:ForcingProps}
  For all $p,\varphi$,
  \begin{enumerate}
  \item $F_p(\varphi) \in [0,1]$.
  \item $q \leq p \Longrightarrow F_q(\varphi) \leq F_p(\varphi)$.
  \item $F_p(\varphi) + F_p(\lnot\varphi) \geq 1$.
  \end{enumerate}
\end{lemma}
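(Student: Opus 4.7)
The plan is to prove (i) and (ii) by a simultaneous induction on the construction of $\varphi$ (following the clauses of \fref{D:forcing}), and then deduce (iii) in one line from (ii).

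For (i), the atomic case is immediate since $f_p$ takes values in $[0,1]$ by the definition of a forcing property. For each non-atomic clause of \fref{D:forcing} I check that the operation used preserves $[0,1]$: $\lnot$, $\half$, and $\dotplus$ are standard operations on $[0,1]$; and the infima involved---over $\{q : q\leq p\}$ in the $\lnot$ clause, over $\Phi$ in the $\bigwedge$ clause, and over $C$ in the $\inf_x$ clause---are taken over nonempty sets of values already in $[0,1]$ by the induction hypothesis, hence land in $[0,1]$.

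For (ii), the atomic case is exactly the first axiom of a forcing property, turned around ($q \leq p$ instead of $p \leq q$). The $\half$, $\dotplus$, $\bigwedge$, and $\inf_x$ clauses are immediate from the induction hypothesis and the monotonicity of these operations. The negation case needs a brief remark: if $q\leq p$ then $\{r : r\leq q\} \subseteq \{r : r\leq p\}$, so restriction makes the infimum larger, and then applying $1-(\cdot)$ flips the inequality back, yielding $F_q(\lnot\varphi) \leq F_p(\lnot\varphi)$.

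Finally, (iii) follows in one line: by reflexivity of $\leq$ on $\bP$ the value $F_p(\varphi)$ itself belongs to the set $\{F_q(\varphi) : q\leq p\}$, so $\inf_{q\leq p} F_q(\varphi) \leq F_p(\varphi)$, and hence
\[
F_p(\varphi) + F_p(\lnot\varphi) \;=\; F_p(\varphi) + 1 - \inf_{q\leq p} F_q(\varphi) \;\geq\; 1.
\]
There is no real obstacle; the whole statement is a routine structural induction. The only clause where one might pause is the $\lnot$ case of (ii), where the two order reversals---shrinking the domain of the infimum and then taking $1-(\cdot)$---must be composed in the correct order.
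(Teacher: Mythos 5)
Your proof is correct and follows the same route as the paper: items (i) and (ii) by structural induction on $\varphi$ through the clauses of \fref{D:forcing}, and item (iii) read off directly from the definition of $F_p(\lnot\varphi)$ using $p\leq p$. The paper states this in one line; your write-out of the $\lnot$ case of (ii) and the reflexivity step in (iii) is exactly the intended argument.
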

\begin{proof}
  The first two items are by induction on the structure of $\varphi$.
  The last one follows directly from the definition.
\end{proof}

\begin{definition}
  We also define $F^w_p$ by:
  \begin{gather*}
    F^w_p(\varphi) = \sup_{q\leq p}\inf_{q'\leq q} F_{q'}(\varphi).
  \end{gather*}
  If $r \in \setR$ and $F^w_p(\varphi) < r$ we say that $p$ \emph{weakly forces}
  that $\varphi < r$, in symbols $p \Vdash^w \varphi<r$.
\end{definition}

By \fref{lem:ForcingProps}, $F^w_p(\varphi) \leq F_p(\varphi)$.

\begin{remark}
The weak forcing relation $\Vdash^w$ can be defined without reference to the function $F^w_p$ as follows:
  $p \Vdash^w \varphi<r$ if and only if
  $(\exists s < r)(\forall q\leq p)(\exists q'\leq q)(q' \Vdash \varphi<s)$. We can then define
  $F^w_p(\varphi)$ as $\inf\{\,r\mid p \Vdash^w \varphi<r \,\}$.
\end{remark}

\begin{lemma}
  \label{lem:WForceRobust}
  Let $p \in \bP$, $\varphi \in \cL_A^s(C)$ and $r \in \setR$.
  Then
  \begin{gather*}
    F^w_p(\varphi) = \sup_{q\leq p} F^w_q(\varphi) = \sup_{q\leq p}\inf_{q'\leq q} F^w_{q'}(\varphi).
  \end{gather*}
\end{lemma}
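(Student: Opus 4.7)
The plan is to establish both equalities by combining the monotonicity of $F^w$ with the observation that the outer supremum in the definition of $F^w$ can always be ``bootstrapped'' by inserting the trivial index. The main obstacle is purely bookkeeping: keeping straight which inf/sup ranges over which set of conditions. Nothing in the argument goes beyond manipulating inequalities between infima and suprema, so no forcing-specific machinery beyond the definitions of $F$ and $F^w$ from \fref{def:forcing} and the three properties in \fref{lem:ForcingProps} should be needed.

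First I would record the monotonicity of $F^w$: if $q\leq p$ then $\{q_1 : q_1\leq q\}\subseteq\{q_1 : q_1\leq p\}$, so the outer supremum in
\[
F^w_q(\varphi)=\sup_{q_1\leq q}\inf_{q_2\leq q_1}F_{q_2}(\varphi)
\]
is taken over a subset of the one defining $F^w_p(\varphi)$, which gives $F^w_q(\varphi)\leq F^w_p(\varphi)$. This immediately yields the easy inequality $\sup_{q\leq p}F^w_q(\varphi)\leq F^w_p(\varphi)$ for the first equality. For the reverse inequality, I would observe that by picking the trivial index $q_1=q$ inside $F^w_q$, one has $F^w_q(\varphi)\geq \inf_{q'\leq q}F_{q'}(\varphi)$, so taking $\sup_{q\leq p}$ of both sides gives $\sup_{q\leq p}F^w_q(\varphi)\geq F^w_p(\varphi)$. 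This proves the first equality.

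For the second equality, the direction
\[
\sup_{q\leq p}\inf_{q'\leq q}F^w_{q'}(\varphi)\leq F^w_p(\varphi)
\]
is immediate from $F^w_{q'}\leq F_{q'}$ (a consequence of \fref{lem:ForcingProps}). For the other direction, I would fix $q\leq p$ and $\varepsilon>0$, and by definition of $F^w_q$ pick $q_1\leq q$ with $\inf_{q_2\leq q_1}F_{q_2}(\varphi)>F^w_q(\varphi)-\varepsilon$. Then for any $q'\leq q_1$, using again the trivial choice inside $F^w_{q'}$ and the fact that $q_2\leq q'\leq q_1$ implies $q_2\leq q_1$, one obtains
\[
F^w_{q'}(\varphi)\geq\inf_{q_2\leq q'}F_{q_2}(\varphi)\geq\inf_{q_2\leq q_1}F_{q_2}(\varphi)>F^w_q(\varphi)-\varepsilon.
\]
Hence $\inf_{q'\leq q_1}F^w_{q'}(\varphi)\geq F^w_q(\varphi)-\varepsilon$, and since $q_1\leq p$, also $\sup_{q_1\leq p}\inf_{q'\leq q_1}F^w_{q'}(\varphi)\geq F^w_q(\varphi)-\varepsilon$. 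Letting $\varepsilon\to 0$ and then taking $\sup_{q\leq p}$ on the right, the first equality gives $\sup_{q_1\leq p}\inf_{q'\leq q_1}F^w_{q'}(\varphi)\geq F^w_p(\varphi)$, completing the proof.
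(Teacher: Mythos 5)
Your proof is correct and follows essentially the same route as the paper's: both arguments are pure sup/inf bookkeeping using monotonicity of $F_p$ in $p$, the reflexivity/transitivity of $\leq$, and the bound $F^w_p \leq F_p$. The paper handles the hard direction of the second equality with a single chain of inequalities (substituting the definition of $F^w_{q'}$ and taking $q''=q'$ in the inner supremum) rather than your $\varepsilon$-argument, but the content is identical.
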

\begin{proof}
  That $F^w_p(\varphi) = \sup_{q\leq p} F^w_q(\varphi)$ follows easily from the
  definitions,
  and $\sup_{q\leq p} F^w_q(\varphi) \geq \sup_{q\leq p}\inf_{q'\leq q} F^w_{q'}(\varphi)$ is
  immediate.
  Finally:
  \begin{align*}
    \sup_{q\leq p}\inf_{q'\leq q} F^w_{q'}(\varphi)
    & = \sup_{q\leq p}\inf_{q'\leq q} \sup_{q''\leq q'} \inf_{q'''\leq q''} F_{q'''}(\varphi)
    \geq \sup_{q\leq p}\inf_{q'\leq q} \inf_{q'''\leq q'} F_{q'''}(\varphi) \\
    & = \sup_{q\leq p}\inf_{q'\leq q} F_{q'}(\varphi)
    = F^w_p(\varphi).
    \qedhere
  \end{align*}
\end{proof}

\begin{proposition}
  \label{prp:WForceInd}
  The weak forcing function $F^w$ obeys the following inductive rules:
  \[
  \begin{array}{lcl}
    F^w_p(\lnot\varphi) & = & \lnot \inf_{q\leq p} F^w_q(\varphi) \\
    F^w_p(\half \varphi) & = & \half F^w_p(\varphi) \\
    F^w_p(\varphi \dotplus \psi) & = & \sup_{q\leq p} \inf_{q'\leq q} F^w_{q'}(\varphi) \dotplus F^w_{q'}(\psi) \\
    F^w_p(\bigwedge\Phi) & = & \sup_{q\leq p} \inf_{q'\leq q} \inf_{\varphi\in\Phi} F^w_{q'}(\varphi) \\
    F^w_p(\inf_x \varphi(x)) & = & \sup_{q\leq p} \inf_{q'\leq q} \inf_{c\in C} F^w_{q'}(\varphi(c)).
  \end{array}
  \]
\end{proposition}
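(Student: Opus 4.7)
The plan is to verify each of the five inductive rules separately.

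For the $\lnot$ rule, the argument is a direct unrolling: since $\lnot x = 1-x$ swaps $\sup$ and $\inf$, substituting $F_{q'}(\lnot\varphi) = \lnot\inf_{q''\leq q'}F_{q''}(\varphi)$ into the definition of $F^w_p$ gives
\[
F^w_p(\lnot\varphi) = \sup_{q\leq p}\inf_{q'\leq q}\bigl(\lnot\inf_{q''\leq q'}F_{q''}(\varphi)\bigr) = \sup_{q\leq p}\lnot F^w_q(\varphi) = \lnot\inf_{q\leq p}F^w_q(\varphi).
\]
The $\half$ rule is trivial as $\half$ commutes with both $\sup$ and $\inf$.

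For the two infimum rules ($\bigwedge\Phi$ and $\inf_x\varphi$), I plan a symmetric argument; consider $\bigwedge\Phi$. The direction $\mathrm{RHS}\leq\mathrm{LHS}$ follows immediately from $F\geq F^w$: we have $\inf_\varphi F_{q'}(\varphi)\geq\inf_\varphi F^w_{q'}(\varphi)$, and the inequality survives $\sup_q\inf_{q'}$. For the reverse direction, I invoke \fref{lem:WForceRobust} to rewrite $F^w_p(\bigwedge\Phi)$ as $\sup_q\inf_{q'}F^w_{q'}(\bigwedge\Phi)$, then apply monotonicity of the $\sup$-$\inf$ operator to the pointwise inequality $F_{q'}(\bigwedge\Phi)\leq F_{q'}(\varphi)$, obtaining $F^w_{q'}(\bigwedge\Phi)\leq F^w_{q'}(\varphi)$ for each $\varphi\in\Phi$, hence $F^w_{q'}(\bigwedge\Phi)\leq\inf_{\varphi\in\Phi}F^w_{q'}(\varphi)$. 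The $\inf_x\varphi$ case is word-for-word identical, with $c\in C$ in place of $\varphi\in\Phi$.

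The main obstacle is the $\dotplus$ rule, because splitting $F^w$ across the two arguments loses the correlation that the joint $F$ retains. The direction $\mathrm{RHS}\leq\mathrm{LHS}$ is again immediate (from $F^w\leq F$ and monotonicity of $\dotplus$). For $\mathrm{LHS}\leq\mathrm{RHS}$ my plan is an averaging argument. Suppose $v<F^w_p(\varphi\dotplus\psi)$ with $v<1$ (the case $v\geq 1$ being vacuous), and fix $\epsilon>0$ together with $q\leq p$ such that $F_{q''}(\varphi)+F_{q''}(\psi)\geq v+\epsilon$ for every $q''\leq q$. For such a $q''$, set $\alpha=\inf_{r\leq q''}F_r(\varphi)$ and $\beta=\inf_{r\leq q''}F_r(\psi)$. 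Taking $r=q''$ inside the $\sup$ defining each $F^w$ gives the bound $F^w_{q''}(\varphi)+F^w_{q''}(\psi)\geq\alpha+\beta$. For a second bound, monotonicity of $F$ in the condition gives, for $r'\leq r\leq q''$, that $F_{r'}(\varphi)\geq v+\epsilon-F_{r'}(\psi)\geq v+\epsilon-F_r(\psi)$, hence $\inf_{r'\leq r}F_{r'}(\varphi)\geq v+\epsilon-F_r(\psi)$; taking $\sup_{r\leq q''}$ yields $F^w_{q''}(\varphi)\geq v+\epsilon-\beta$ and symmetrically $F^w_{q''}(\psi)\geq v+\epsilon-\alpha$, so $F^w_{q''}(\varphi)+F^w_{q''}(\psi)\geq 2(v+\epsilon)-(\alpha+\beta)$. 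The maximum of these two bounds is at least their average $v+\epsilon$. Since $v<1$, truncation at $1$ is harmless, so $F^w_{q''}(\varphi)\dotplus F^w_{q''}(\psi)>v$ for every $q''\leq q$; taking $\inf_{q''\leq q}$ and then $\sup_{q\leq p}$ gives $\mathrm{RHS}>v$, and letting $v\uparrow F^w_p(\varphi\dotplus\psi)$ yields $\mathrm{RHS}\geq\mathrm{LHS}$.
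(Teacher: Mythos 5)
Your proof is correct. For $\lnot$, $\half\varphi$, $\bigwedge\Phi$ and $\inf_x\varphi(x)$ you follow essentially the paper's route: a direct unrolling for the first two, and for the two infimum clauses the reduction via Lemma~\ref{lem:WForceRobust} to the pointwise bound $F^w_{q'}(\bigwedge\Phi)\leq\inf_{\varphi\in\Phi}F^w_{q'}(\varphi)$, which is the paper's quantifier-exchange step stated as monotonicity. The genuine divergence is the $\dotplus$ clause. The paper likewise uses Lemma~\ref{lem:WForceRobust} to reduce to the subadditivity $F^w_{q'}(\varphi\dotplus\psi)\leq F^w_{q'}(\varphi)\dotplus F^w_{q'}(\psi)$, and proves that by a sequential refinement: given $q$, first choose $q'_0\leq q$ making $F_{q'_0}(\varphi)<F^w_p(\varphi)+\varepsilon$, then $q'\leq q'_0$ making $F_{q'}(\psi)<F^w_p(\psi)+\varepsilon$, with monotonicity of $F$ in the condition preserving the first estimate. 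You instead bound the right-hand side from below directly, without invoking Lemma~\ref{lem:WForceRobust} for this clause at all: for each $q''$ below a witness $q$ you establish the two complementary lower bounds $F^w_{q''}(\varphi)+F^w_{q''}(\psi)\geq\alpha+\beta$ and $F^w_{q''}(\varphi)+F^w_{q''}(\psi)\geq 2(v+\varepsilon)-(\alpha+\beta)$, whose maximum exceeds their average $v+\varepsilon$; your handling of the truncation using $v<1$ is also correct. What the paper's argument buys is an isolated, reusable reduction pattern (it repeats the same scheme in Proposition~\ref{prp:WPMSInd}) and a very short verification; what yours buys is a self-contained proof of the hardest clause that never needs to find a single condition where $F(\varphi)$ and $F(\psi)$ are simultaneously small, at the cost of a somewhat less transparent computation.
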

\begin{proof}
  For $\lnot\varphi$ and $\half\varphi$ this follows from a straightforward
  calculation.
  For example:
  \begin{align*}
    F^w_p(\lnot\varphi)
    & = \sup_{q\leq p}\inf_{q'\leq q} F_{q'}(\lnot\varphi)
    = \sup_{q\leq p}\inf_{q'\leq q} \lnot \inf_{q''\leq q'} F_{q'}(\varphi) \\
    & = \lnot \inf_{q\leq p}\sup_{q'\leq q} \inf_{q''\leq q'} F_{q'}(\varphi)
    = \lnot \inf_{q\leq p} F^w_q(\varphi),
  \end{align*}

  For the other three, the inequality $\geq$ is obtained substituting the
  definition of $F^w_p$ on the left hand side and using the fact that
  $F_p \geq F^w_p$.
  For $\leq$, we first use \fref{lem:WForceRobust} to replace each occurrence of
  $F^w_p$ on the left hand side with $\sup_{q<p}\inf_{q'\leq q} F^w_{q'}$. Thus, it will suffice to show that:
  \[
  \begin{array}{lcl}
    F^w_p(\varphi \dotplus \psi) & \leq & F^w_p(\varphi) \dotplus F^w_p(\psi) \\
    F^w_p(\bigwedge\Phi) & \leq & \inf_{\varphi\in\Phi} F^w_p(\varphi) \\
    F^w_p(\inf_x \varphi(x)) & \leq & \inf_{c\in C} F^w_p(\varphi(c)).
  \end{array}
  \]

  For $\dotplus$ assume $F^w_p(\varphi) = r$ and $F^w_p(\psi) = s$.
  Then for all $\varepsilon>0$ and for all $q \leq p$ there is $q'_0\leq q$ such that
  $F_{q'_0}(\varphi) < r+\varepsilon$, and as $q'_0 \leq p$ there is $q' \leq q'_0$ such that
  $F_{q'}(\psi) < s+\varepsilon$.
  Then $F_{q'}(\varphi \dotplus \psi) < r+s + 2\varepsilon$, yielding
  $F^w_{q'}(\varphi \dotplus \psi) \leq r+s$.

  For $\bigwedge\Phi$ and $\inf_x \varphi(x)$ it's a straightforward quantifier exchange
  argument, e.g.:
  \begin{align*}
    F^w_p(\bigwedge\Phi)
    & = \sup_{q\leq p}\inf_{q'\leq q} \inf_{\varphi\in\Phi} F_p(\varphi)
    \leq \inf_{\varphi\in\Phi} \sup_{q\leq p}\inf_{q'\leq q} F_p(\varphi) = \inf_{\varphi\in\Phi} F^w(\varphi).
    \qedhere
  \end{align*}
\end{proof}


\begin{lemma}
  \label{lem:WForceDense}
  For all $p \in \bP$ and $\tau$: $F^w_p(\inf_x d(\tau,x)) = 0$.
\end{lemma}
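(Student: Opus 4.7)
The plan is to unfold the two outer layers of the definition and observe that the conclusion falls out directly from axiom (ii) of the definition of a forcing property.

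First I would compute $F_p\bigl(\inf_x d(\tau,x)\bigr)$. By the clause for $\inf_x$ in Definition~\ref{D:forcing} and the fact that $d(\tau,c)$ is atomic, this equals
\[
\inf_{c\in C} F_p(d(\tau,c)) = \inf_{c\in C} f_p(d(\tau,c)).
\]
Therefore, by the definition of $F^w_p$,
\[
F^w_p\bigl(\inf_x d(\tau,x)\bigr) \;=\; \sup_{q\leq p}\;\inf_{q'\leq q}\;\inf_{c\in C} f_{q'}(d(\tau,c)).
\]
Since every term in this expression is nonnegative, to prove it equals $0$ it suffices to show that for every $q\leq p$ and every $\varepsilon>0$ there exist $q'\leq q$ and $c\in C$ with $f_{q'}(d(\tau,c))<\varepsilon$.

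Fix such $q$ and $\varepsilon$. I would apply condition (ii) of the definition of a forcing property, using the condition $q$ in place of $p$, with $\sigma:=\tau$ and $\varphi$ any atomic formula (the third inequality of (ii) is harmless and can be ignored here). This directly yields $q'\leq q$ and $c\in C$ such that $f_{q'}(d(\tau,c))<\varepsilon$, which is exactly what is needed. No step presents any genuine obstacle: the content of the lemma is essentially a repackaging of the density clause built into the definition of a forcing property.
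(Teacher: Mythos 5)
Your proof is correct and follows essentially the same route as the paper's: unfold $F^w_p(\inf_x d(\tau,x))$ to $\sup_{q\leq p}\inf_{q'\leq q}\inf_{c\in C} f_{q'}(d(\tau,c))$ and observe that clause (ii) of the definition of a forcing property forces this to be $0$. The paper phrases it as a one-line contradiction; your version just spells out the same appeal to the density clause in direct form.
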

\begin{proof}
  If not then 
  $F^w_p(\inf_x d(\tau,x)) = \sup_{q\leq p} \inf_{q'\leq q} \inf_{c\in C}
  f_p(d(\tau,x)) > 0$.
  But this contradicts the
  definition of forcing property.
\end{proof}

\begin{definition}
	\label{dfn:Generic}
  A nonempty $G \subseteq \bP$ is \emph{generic} if:
  \begin{enumerate}
  \item It is directed downwards, i.e., for all $p,q \in G$ there is
    $p' \in G$ such that $p' \leq p,q$.
  \item It is closed upwards, i.e., if $p \in G$ and $q \geq p$ then $q \in G$.
  \item For every $\varphi \in \cL_A^s(C)$ and $r > 1$
    there is $p \in G$ such that $F_p(\varphi) + F_p(\lnot\varphi) < r$.
  \end{enumerate}
  If $G$ is a generic set and $\varphi \in \cL_A^s(C)$ we define
  $$\varphi^G = \inf_{p\in G} F_p(\varphi).$$
\end{definition}

\begin{proposition}
	\label{P:generic set}
  Every condition belongs to a generic set.
\end{proposition}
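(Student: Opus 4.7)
The plan is to construct, starting from the given condition $p_0$, a descending chain $p_0 \geq p_1 \geq p_2 \geq \cdots$ in $\bP$ that meets every ``density requirement'' in \fref{dfn:Generic}(iii), and then take $G$ to be its upward closure. Since $\cL_A$ and $C$ are countable, the set $\cL_A^s(C)$ is countable, so I can enumerate it as $\{\psi_n : n < \omega\}$ with each sentence appearing infinitely often (so that for every $\varphi$ and every $\varepsilon > 0$ there are arbitrarily large $n$ with $\psi_n = \varphi$).

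The key lemma to establish first is the following: for every $p \in \bP$, every $\varphi \in \cL_A^s(C)$, and every $\varepsilon > 0$ there is some $q \leq p$ with $F_q(\varphi) + F_q(\lnot\varphi) < 1 + \varepsilon$. To prove it, set $\alpha := \inf_{q' \leq p} F_{q'}(\varphi)$ and choose $q \leq p$ with $F_q(\varphi) < \alpha + \varepsilon$, available from the definition of infimum. Any $q' \leq q$ satisfies $q' \leq p$, so $\inf_{q' \leq q} F_{q'}(\varphi) \geq \alpha$, and unwinding the definition of negation in \fref{D:forcing} gives
\[
F_q(\lnot\varphi) \;=\; 1 - \inf_{q' \leq q} F_{q'}(\varphi) \;\leq\; 1 - \alpha,
\]
so that $F_q(\varphi) + F_q(\lnot\varphi) < (\alpha + \varepsilon) + (1 - \alpha) = 1 + \varepsilon$.

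With this lemma in hand, I would recursively pick $p_{n+1} \leq p_n$ with $F_{p_{n+1}}(\psi_{n+1}) + F_{p_{n+1}}(\lnot\psi_{n+1}) < 1 + \tfrac{1}{n+2}$, and then define $G := \{q \in \bP : q \geq p_n \text{ for some } n < \omega\}$. Clauses (i) and (ii) of \fref{dfn:Generic} are immediate: $G$ is upward closed by construction, and given $q, q' \in G$ with $q \geq p_n$ and $q' \geq p_m$, the condition $p_{\max(n,m)}$ lies in $G$ and below both $q$ and $q'$. For clause (iii), given $\varphi \in \cL_A^s(C)$ and $r > 1$, I use the repetitions in the enumeration to find $n$ with $\psi_n = \varphi$ and $\tfrac{1}{n+1} < r - 1$; then $p_n \in G$ witnesses the requirement, and $p_0 \in G$ as well. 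The only real obstacle is the key lemma, which is essentially a one-line computation with the definition of $F_p(\lnot\varphi)$ combined with the monotonicity of $F_{(\cdot)}(\varphi)$ from \fref{lem:ForcingProps}; the rest is a standard generic chain construction.
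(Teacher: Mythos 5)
Your proof is correct and follows essentially the same route as the paper's: enumerate the requirements, build a descending chain $p_0 \geq p_1 \geq \cdots$ using the identity $F_p(\lnot\varphi) = 1 - \inf_{q\leq p} F_q(\varphi)$ together with monotonicity of $F$ to make each $F_{p_{n}}(\psi)+F_{p_{n}}(\lnot\psi)$ small, and take $G$ to be the upward closure. The only differences are cosmetic (the paper enumerates pairs $(r,\varphi)$ with $r\in\setQ$, $r>1$, rather than repeating each sentence infinitely often with $\varepsilon = \tfrac{1}{n+2}$).
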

\begin{proof}
  Fix $p\in\bP$. Let $(\,(r_n,\varphi_n)\colon n < \omega\,)$ enumerate all pairs
  $(r,\varphi)$, where $r \in \setQ$, $r>1$, and $\varphi \in \cL_A^s(C)$.
  Construct a sequence $p_0 \geq p_1 \geq \ldots \geq p_n \geq \ldots$ in $\bP$ as
  follows.
  We start with $p_0 = p$.
  Assume $p_n$ has already been chosen.
  By definition $F_{p_n}(\lnot\varphi_n) + \inf_{q\leq p_n} F_{q}(\varphi_n) = 1 < r_n$, so
  we can choose $p_{n+1}\leq p_n$ such that
  $F_{p_n}(\lnot\varphi_n) + F_{p_{n+1}}(\varphi_n) < r_n$,
  whereby $F_{p_{n+1}}(\lnot\varphi_n) + F_{p_{n+1}}(\varphi_n) < r_n$.
  Define 
  $$G = \{\,q \in \bP\mid q \geq p_n\text{ for some }n\,\}.$$
  Then $G$ is generic, and $p \in G$.
\end{proof}

\begin{lemma}
  \label{lem:GenSetWForce}
  Let $G$ be generic and $\varphi \in \cL_A^s(C)$.
  Then $\varphi^G = \inf_{p\in G} F^w_p(\varphi)$.
\end{lemma}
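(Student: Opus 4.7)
The plan is to prove the equality by two inequalities. The direction $\inf_{p\in G} F^w_p(\varphi) \leq \varphi^G$ is immediate, since $F^w_p(\varphi) \leq F_p(\varphi)$ was noted right after the definition of weak forcing, so $\inf_{p\in G} F^w_p(\varphi) \leq \inf_{p\in G} F_p(\varphi) = \varphi^G$. The work is entirely in the reverse inequality $\varphi^G \leq \inf_{p\in G} F^w_p(\varphi)$, which reduces to showing that for every $p\in G$ and every $r > F^w_p(\varphi)$, we have $\varphi^G \leq r$.

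Fix such $p$ and $r$. By the inductive characterization in the remark following \fref{lem:WForceRobust} (or directly by unpacking the definition of $F^w_p$), there is some $s$ with $F^w_p(\varphi) < s < r$ such that the set $D_s = \{q' \in \bP \mid F_{q'}(\varphi) < s\}$ is dense below $p$. The problem is that $D_s$ need not meet $G$, since genericity in \fref{dfn:Generic} only asserts meeting of the specific ``excluded middle'' dense sets for $\varphi$. The key idea is that we do not need $q' \in G$ directly; it will suffice to manufacture an element of $G$ whose $F$-value on $\varphi$ is controlled by a witness $q' \in D_s$ that lies below it.

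Fix $\varepsilon > 0$ with $s + \varepsilon < r$. Apply genericity clause (iii) to the sentence $\varphi$ and the real $1 + \varepsilon$ to obtain $p_0 \in G$ with $F_{p_0}(\varphi) + F_{p_0}(\lnot\varphi) < 1 + \varepsilon$. By downward directedness of $G$, pick $p_1 \in G$ with $p_1 \leq p, p_0$. Since $p_1 \leq p$ and $D_s$ is dense below $p$, there exists $q' \leq p_1$ with $F_{q'}(\varphi) < s$. Now exploit the definition $F_{p_0}(\lnot\varphi) = 1 - \inf_{q \leq p_0} F_q(\varphi)$: since $q' \leq p_1 \leq p_0$, we have $F_{p_0}(\lnot\varphi) \geq 1 - F_{q'}(\varphi) > 1 - s$. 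Substituting into the inequality from genericity,
\[
F_{p_0}(\varphi) < 1 + \varepsilon - F_{p_0}(\lnot\varphi) \leq F_{q'}(\varphi) + \varepsilon < s + \varepsilon < r.
\]
Therefore $\varphi^G \leq F_{p_0}(\varphi) < r$, and taking the infimum over $r > F^w_p(\varphi)$ and then over $p \in G$ yields $\varphi^G \leq \inf_{p\in G} F^w_p(\varphi)$, completing the proof.

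The only real obstacle is the one addressed in the last step: genericity is formulated in terms of a very specific family of dense sets (those associated with the forcing-excluded-middle relation), so a direct appeal to meeting the dense set $D_s$ is unavailable. The trick of combining a genericity witness $p_0$ for $\varphi$ with a witness $q' \in D_s$ below a common lower bound, and then using monotonicity of $F_{\cdot}(\varphi)$ inside the definition of $F_{p_0}(\lnot\varphi)$, is what makes the argument go through.
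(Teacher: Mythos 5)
Your proof is correct and is essentially the paper's argument: both hinge on combining genericity clause (iii) with directedness of $G$ and the identity $F_q(\lnot\varphi)=1-\inf_{q'\leq q}F_{q'}(\varphi)$, which ties $F_q(\varphi)$ to $F^w_p(\varphi)$ up to $\varepsilon$. The only difference is presentational --- you argue directly (unpacking $F^w_p(\varphi)<s$ as density of $D_s$ below $p$ and exhibiting the witness $q'$), whereas the paper runs the same computation as a one-line proof by contradiction.
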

\begin{proof}
  The inequality $\geq$ is immediate since $F^w_p(\phi)\leq F_p(\phi)$.
  For the other, assume $\varphi^G > \inf_{p\in G} F^w_p(\varphi)$, so there are
  $\varepsilon > 0$ and $p \in G$ such that $\varphi^G - \varepsilon > F^w_p(\varphi)$.
  As $G$ is generic there is $q \in G$ such that
  $F_q(\varphi) + F_q(\lnot\varphi) < 1+\varepsilon$, and as $p \in G$
  we may assume $q \leq p$.
  We obtain
  $$F^w_p(\varphi) \geq \inf_{q'\leq q}F_{q'}(\varphi) = 1-F_q(\lnot\varphi) > F_q(\varphi) - \varepsilon \geq \varphi^G - \varepsilon
  > F^w_p(\varphi),$$
  a contradiction.
\end{proof}

\begin{lemma}
  \label{lem:GenSetNeg}
  If $G$ is generic and $\varphi \in \cL_A^s(C)$, then
  $(\lnot\varphi)^G = 1-\varphi^G$.
\end{lemma}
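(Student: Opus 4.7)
The plan is to prove the equivalent equality $\varphi^G + (\lnot\varphi)^G = 1$ by establishing both inequalities separately, and to first reduce the problem to a statement about the single quantity $\inf_{p\in G}(F_p(\varphi) + F_p(\lnot\varphi))$.

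First I would show the key reduction
\[
\inf_{p\in G}\bigl(F_p(\varphi) + F_p(\lnot\varphi)\bigr) \;=\; \varphi^G + (\lnot\varphi)^G.
\]
The inequality $\geq$ is the general fact that $\inf(f+g) \geq \inf f + \inf g$. For $\leq$, fix $p_1,p_2 \in G$; by downward directedness of $G$ (item (i) of \fref{dfn:Generic}) there is $p_3 \in G$ with $p_3 \leq p_1, p_2$, and by monotonicity of $F$ in the conditions (\fref{lem:ForcingProps}(ii)) we obtain $F_{p_3}(\varphi) + F_{p_3}(\lnot\varphi) \leq F_{p_1}(\varphi) + F_{p_2}(\lnot\varphi)$, which gives the required bound after taking infima over $p_1$ and $p_2$ independently.

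With this reduction in hand, the two remaining inequalities are immediate. The lower bound $\varphi^G + (\lnot\varphi)^G \geq 1$ follows from \fref{lem:ForcingProps}(iii), which tells us that $F_p(\varphi) + F_p(\lnot\varphi) \geq 1$ for every $p$, hence in particular on $G$. The upper bound $\varphi^G + (\lnot\varphi)^G \leq 1$ is exactly what genericity is designed to give: by condition (iii) of \fref{dfn:Generic}, for every $r>1$ there is some $p \in G$ with $F_p(\varphi) + F_p(\lnot\varphi) < r$, so the infimum over $G$ is $\leq r$; letting $r$ decrease to $1$ finishes the proof.

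I do not expect any real obstacle here; the only subtle point is the reduction step, where one must remember to invoke downward directedness of $G$ to interchange the infimum of a sum with the sum of infima. Everything else is a direct appeal to \fref{lem:ForcingProps} and the generic-set axiom.
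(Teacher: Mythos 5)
Your proof is correct and follows essentially the same route as the paper's: the lower bound $\varphi^G+(\lnot\varphi)^G\geq 1$ comes from \fref{lem:ForcingProps}(iii) and the upper bound from condition (iii) of \fref{dfn:Generic}. The only difference is that you explicitly justify, via downward directedness and monotonicity, the identity $\inf_{p\in G}\bigl(F_p(\varphi)+F_p(\lnot\varphi)\bigr)=\varphi^G+(\lnot\varphi)^G$ needed for the lower bound, a step the paper's one-line proof leaves implicit; this is a welcome clarification rather than a deviation.
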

\begin{proof}
  From \fref{lem:ForcingProps} we have $\varphi^G+(\lnot\varphi)^G \geq 1$, while
  $\varphi^G+(\lnot\varphi)^G \leq 1$ follows from \fref{dfn:Generic}.
\end{proof}

\begin{lemma}
  \label{lem:GenSetMetric}
  Let $G$ be a generic set and $\tau,\sigma \in \cT(C)$.
  Then:
  \begin{enumerate}
  \item For every $\varepsilon > 0$ there is $c_{\tau,\varepsilon,G} \in C$ such that
    $d(\tau,c_{\tau,\varepsilon,G})^G < \varepsilon$.
  \item $d(\tau,\sigma)^G = d(\sigma,\tau)^G$.
  \item For every atomic $\cL(C)$-formula $\varphi(x)$,
    if $d(\tau,\sigma)^G < \delta_{\varphi,x}(\varepsilon)$ then $|\varphi(\tau)^G-\varphi(\sigma)^G| < \varepsilon$.
  \end{enumerate}
\end{lemma}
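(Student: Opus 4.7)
For (i), the plan is to invoke \fref{lem:WForceDense} and \fref{lem:GenSetWForce}. Since $F^w_p(\inf_x d(\tau,x)) = 0$ for every $p$, \fref{lem:GenSetWForce} gives $(\inf_x d(\tau,x))^G = 0$. Unwinding the definition of $F_p$ on $\inf_x d(\tau,x)$ and exchanging the two infima, we obtain
\[
0 = \inf_{p \in G} \inf_{c \in C} F_p(d(\tau,c)) = \inf_{c \in C} \inf_{p \in G} F_p(d(\tau,c)) = \inf_{c \in C} d(\tau,c)^G,
\]
so for each $\varepsilon > 0$ some $c \in C$ works.

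For (ii), it suffices to prove $d(\tau,\sigma)^G \leq d(\sigma,\tau)^G$ and apply the symmetric bound. The key bound to establish, for every $p \in \bP$, is $F^w_p(d(\tau,\sigma)) \leq f_p(d(\sigma,\tau))$. To see it, fix $q \leq p$ and $\varepsilon' > 0$; by the forcing property clause (2) applied to $q$ there is $q' \leq q$ with $f_{q'}(d(\tau,\sigma)) < f_q(d(\sigma,\tau)) + \varepsilon'$. Hence $\inf_{q' \leq q} f_{q'}(d(\tau,\sigma)) \leq f_q(d(\sigma,\tau))$, and taking $\sup_{q \leq p}$ together with the monotonicity of $f$ yields $F^w_p(d(\tau,\sigma)) \leq f_p(d(\sigma,\tau)) = F_p(d(\sigma,\tau))$. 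Now \fref{lem:GenSetWForce} gives
\[
d(\tau,\sigma)^G = \inf_{p\in G} F^w_p(d(\tau,\sigma)) \leq \inf_{p\in G} F_p(d(\sigma,\tau)) = d(\sigma,\tau)^G.
\]

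For (iii), by the left-continuity of $\delta_{\varphi,x}$ we can first pick $\varepsilon' < \varepsilon$ with $d(\tau,\sigma)^G < \delta_{\varphi,x}(\varepsilon')$; it is then enough to prove $|\varphi(\tau)^G - \varphi(\sigma)^G| \leq \varepsilon'$. Choose $p_0 \in G$ with $f_{p_0}(d(\tau,\sigma)) < \delta_{\varphi,x}(\varepsilon')$. For every $p \leq p_0$, monotonicity gives $f_p(d(\tau,\sigma)) < \delta_{\varphi,x}(\varepsilon')$, and the same holds for every $q \leq p$; hence forcing property clause (2) applied to $q$ with parameter $\varepsilon'$ yields $q' \leq q$ with $f_{q'}(\varphi(\sigma)) < f_q(\varphi(\tau)) + \varepsilon'$. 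Therefore $\inf_{q'\leq q} F_{q'}(\varphi(\sigma)) \leq F_q(\varphi(\tau)) + \varepsilon'$, and taking $\sup_{q\leq p}$ we obtain $F^w_p(\varphi(\sigma)) \leq F_p(\varphi(\tau)) + \varepsilon'$ for every $p \in \bP$ with $p \leq p_0$.

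Since $G$ is directed downward, the set $\{p \in G \mid p \leq p_0\}$ is cofinal in $G$, so $\inf_{p \in G} F^w_p(\varphi(\sigma)) = \inf_{p \in G, p \leq p_0} F^w_p(\varphi(\sigma))$ and similarly for $F_p(\varphi(\tau))$. Combined with \fref{lem:GenSetWForce}, this yields $\varphi(\sigma)^G \leq \varphi(\tau)^G + \varepsilon'$. To obtain the reverse bound, we use (ii): $d(\sigma,\tau)^G = d(\tau,\sigma)^G < \delta_{\varphi,x}(\varepsilon')$, so the same argument with the roles of $\tau$ and $\sigma$ interchanged gives $\varphi(\tau)^G \leq \varphi(\sigma)^G + \varepsilon'$. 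The main obstacle is managing the strict-vs.-non-strict inequalities; the device of replacing $\varepsilon$ by an $\varepsilon' < \varepsilon$ via left-continuity of $\delta_{\varphi,x}$, together with the reduction of $F^w$ bounds to $F$ bounds via clause (2), is what makes everything line up.
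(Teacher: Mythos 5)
Your proposal is correct and follows essentially the same route as the paper: part (i) via Lemmas \ref{lem:WForceDense} and \ref{lem:GenSetWForce}, and parts (ii) and (iii) by reducing $\varphi^G$ to weak forcing via Lemma \ref{lem:GenSetWForce} and then invoking clause (2) of the definition of a forcing property. The paper leaves (ii) and (iii) as a one-line remark, and your careful handling of the strict inequalities --- passing to $\varepsilon' < \varepsilon$ using the left-continuity convention \textup{(\ref{eq:GoodContMod2})} and using downward directedness of $G$ to restrict to conditions below $p_0$ --- is exactly the bookkeeping the paper omits.
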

\begin{proof}
  For (i), observe that $(\inf_x d(\tau,x))^G =0$ by
  \fref{lem:WForceDense} and \fref{lem:GenSetWForce}, so there is $p \in G$ such that
  $F_p(\inf_x d(\tau,x)) < \varepsilon$, and thus there exists $c \in C$ such
  that $d(\tau,c)^G \leq F_p(d(\tau,c)) < \varepsilon$.
  The other two statements follow directly from
  \fref{lem:GenSetWForce} and the definition of forcing property.
\end{proof}

\begin{lemma}
	\label{L:generic model}
  Let $M_0^G$ be the term algebra $\cT(C)$ equipped with the natural
  interpretation of the function symbols, and interpreting the
  predicate symbols by:   $P^{M_0^G}(\bar \tau) = P(\bar \tau)^G$.
  Then $M_0^G$ is a pre-$\cL(C)$-structure, and its completion $M^G$ is a
  canonical structure.
\end{lemma}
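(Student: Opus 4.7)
The plan is to verify, using \fref{lem:GenSetMetric} and \fref{lem:ForcingProps}, that $M_0^G$ meets the requirements of a pre-$\cL(C)$-structure, and then to pass to the metric completion. Concretely, I need to check that $d^{M_0^G}$ is a pseudometric bounded by $1$, that each function symbol $f$ respects each $\delta_{f,i}$, and that each predicate symbol $P$ maps into $[0,1]$ while respecting each $\delta_{P,i}$.

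I will first handle $d^{M_0^G}$. Boundedness in $[0,1]$ is immediate from \fref{lem:ForcingProps}, and symmetry is \fref{lem:GenSetMetric}(ii). For $d(\tau,\tau)^G=0$, fix $\varepsilon>0$ and use \fref{lem:GenSetMetric}(i) to pick $c \in C$ with $d(\tau,c)^G$ smaller than both $\varepsilon$ and $\delta_{d,1}(\varepsilon)$; applying \fref{lem:GenSetMetric}(iii) to the atomic formula $d(\tau,x)$ then gives $|d(\tau,\tau)^G - d(\tau,c)^G|<\varepsilon$, so $d(\tau,\tau)^G<2\varepsilon$. For the triangle inequality I apply \fref{lem:GenSetMetric}(iii) to $d(x,\rho)$; the relevant modulus is $\delta_{d,0}(\varepsilon)=\varepsilon$ (the standard $1$-Lipschitz modulus of the distinguished symbol $d$), so $|d(\tau,\rho)^G - d(\sigma,\rho)^G|<\varepsilon$ whenever $d(\tau,\sigma)^G<\varepsilon$, and letting $\varepsilon\downarrow d(\tau,\sigma)^G$ produces the Lipschitz bound that, combined with symmetry, yields $d(\tau,\rho)^G \leq d(\tau,\sigma)^G + d(\sigma,\rho)^G$.

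Next, for a function symbol $f$ of arity $n$ and index $i<n$, I will consider the atomic formula
\[
\phi(x) \;=\; d\bigl(f(\tau_0,\ldots,\tau_{n-1}),\; f(\tau_0,\ldots,\tau_{i-1},x,\tau_{i+1},\ldots,\tau_{n-1})\bigr).
\]
Unwinding the inductive definitions of $\delta_{\tau,x}$ and of the modulus of an atomic formula from \fref{sec:preliminaries} yields $\delta_{\phi,x} \geq \delta_{f,i}$. Hence if $d(\tau_i,\sigma_i)^G<\delta_{f,i}(\varepsilon)$, \fref{lem:GenSetMetric}(iii) gives $|\phi(\tau_i)^G - \phi(\sigma_i)^G|<\varepsilon$; but $\phi(\tau_i)^G = d(f(\bar\tau),f(\bar\tau))^G = 0$ by the reflexivity just established, so $\phi(\sigma_i)^G<\varepsilon$, which is the required continuity estimate for $f$. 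The same strategy with $\phi(x) = P(\tau_0,\ldots,x,\ldots,\tau_{n-1})$ handles predicate symbols; values lie in $[0,1]$ by \fref{lem:ForcingProps}.

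To conclude, I will take the Kolmogorov quotient of $M_0^G$ by $\tau \sim \sigma \iff d(\tau,\sigma)^G = 0$ and then its metric completion $M^G$. The function and predicate symbols descend to the quotient (because they are uniformly continuous with respect to $d^{M_0^G}$) and extend uniquely to $M^G$ preserving their moduli by the standard density/uniform-continuity argument, and $d^{M^G}$ is then automatically a complete metric. Density of $\{c_i^{M^G} : i < \omega\}$ follows from \fref{lem:GenSetMetric}(i): every $\tau \in \cT(C)$ is arbitrarily well approximated in $d^{M^G}$ by elements of $C$, and $\cT(C)$ is dense in $M^G$ by construction. The main obstacle lies in deriving reflexivity and the triangle inequality for $d^{M_0^G}$, both of which must be squeezed out of the purely continuity-flavoured statement of \fref{lem:GenSetMetric} via the standard $1$-Lipschitz convention on the modulus of $d$, together with correctly computing $\delta_{\phi,x}$ for the composite atomic formula used in the function symbol step.
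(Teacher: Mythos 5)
Your proposal is correct and follows essentially the same route as the paper: verify the pseudometric axioms and the uniform-continuity requirements for every symbol via Lemma~\ref{lem:GenSetMetric} (symmetry from (ii), triangle inequality and moduli from (iii) with $\delta_{d(x,\sigma),x}=\id$, reflexivity and density of the constants from (i)), then pass to the quotient and completion. The only harmless divergences are that you derive $d(\tau,\tau)^G=0$ directly from (i) and (iii) where the paper routes it through the triangle inequality, and that you spell out the composite atomic formula needed for the function-symbol case, which the paper leaves implicit.
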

\begin{proof}
  First we use \fref{lem:GenSetMetric} to show that $d^{M_0^G}$ is a pseudometric.
  Symmetry is \fref{lem:GenSetMetric}(ii).
  The triangle inequality follows from \fref{lem:GenSetMetric}(iii),
  keeping in mind that $\delta_{d(x,\sigma),x} = \id$.
  That $d^{M_0^G}(\tau,\tau) = 0$ follows from the
  triangle inequality and \fref{lem:GenSetMetric}(i).
  Finally, by \fref{lem:GenSetMetric}(iii),
  every symbol respects its
  uniform continuity modulus.
  Thus $M_0^G$ is a pre-structure, and we can define $M^G$ to be its
  completion.

  That $C^{M^G}$ is dense in $M^G$ now follows from
  \fref{lem:GenSetMetric}(i).
\end{proof}

\begin{theorem}
	\label{T:generic model}
  For all $\varphi \in \cL_A^s(C)$ we have $\varphi^{M^G} = \varphi^G$.
\end{theorem}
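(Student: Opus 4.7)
The plan is to proceed by induction on the structure of $\varphi$, verifying case by case that the inductive clause defining $F_p(\varphi)$, once we take the infimum over $p \in G$, matches the corresponding semantic clause for $\varphi^{M^G}$.

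The atomic case is essentially by construction: for $\varphi = P(\tau_0, \ldots, \tau_{n-1})$ with $\tau_i \in \cT(C)$, \fref{L:generic model} sets $P^{M_0^G}(\bar\tau) = P(\bar\tau)^G$, and since $P^{M^G}$ continuously extends $P^{M_0^G}$, we get $\varphi^{M^G} = \varphi^G$. The cases $\lnot \varphi$ and $\half \varphi$ follow immediately: $(\lnot\varphi)^G = 1 - \varphi^G$ by \fref{lem:GenSetNeg} and $(\lnot\varphi)^{M^G} = 1 - \varphi^{M^G}$, while $\half$ commutes with $\inf_{p \in G}$ by the corresponding clause in \fref{D:forcing}.

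The case $\varphi \dotplus \psi$ requires commuting the infimum over $G$ with a truncated sum, which is where the downward-directedness of $G$ is needed: given $\varepsilon > 0$, pick $p, q \in G$ with $F_p(\varphi) < \varphi^G + \varepsilon$ and $F_q(\psi) < \psi^G + \varepsilon$, and then a common lower bound $r \in G$ gives $F_r(\varphi \dotplus \psi) < \varphi^G \dotplus \psi^G + 2\varepsilon$ by monotonicity (\fref{lem:ForcingProps}); letting $\varepsilon \to 0$ yields one inequality, and the reverse is immediate from monotonicity of $\dotplus$. The case $\bigwedge \Phi$ is a pure $\inf$-$\inf$ exchange, giving $(\bigwedge \Phi)^G = \inf_{\varphi \in \Phi} \varphi^G$, after which induction concludes.

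The main work is the quantifier case $\inf_x \varphi(x)$. An analogous infimum exchange yields $(\inf_x \varphi(x))^G = \inf_{c \in C} \varphi(c)^G$, which by induction equals $\inf_{c \in C} \varphi^{M^G}(c^{M^G})$. Identifying this with $(\inf_x \varphi(x))^{M^G} = \inf_{a \in M^G} \varphi^{M^G}(a)$ requires two features of $M^G$ that were built in precisely for this purpose: that $\{c^{M^G} : c \in C\}$ is dense in $M^G$ (\fref{L:generic model}), and that $\varphi^{M^G}$ is uniformly continuous in $x$ with modulus $\delta_{\varphi, x}$ (as observed immediately after the definition of $\cL_{\omega_1, \omega}$). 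Given $a \in M^G$ and $\varepsilon > 0$, choose $c \in C$ with $d^{M^G}(a, c^{M^G}) < \delta_{\varphi, x}(\varepsilon)$; uniform continuity then gives $|\varphi^{M^G}(a) - \varphi^{M^G}(c^{M^G})| < \varepsilon$, so the two infima agree. This is the one step where the careful bookkeeping of uniform continuity moduli throughout the definition of $\cL_{\omega_1, \omega}$ is genuinely used, and I expect it to be the main subtlety of the argument.
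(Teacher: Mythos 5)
Your proof is correct and follows essentially the same route as the paper's: induction on $\varphi$, with the atomic case by construction of $M^G$, negation via \fref{lem:GenSetNeg}, and the quantifier case via density of $C^{M^G}$ together with uniform continuity of $\varphi^{M^G}$ respecting $\delta_{\varphi,x}$. The only difference is that you spell out the $\dotplus$ case (using downward directedness of $G$ to commute $\inf_{p\in G}$ with the truncated sum), which the paper dismisses as immediate; your elaboration is accurate and correctly identifies where genericity's directedness clause is used.
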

\begin{proof}
  By induction on $\varphi$:
  \begin{enumerate}
  \item For $\varphi$ atomic, this is immediate from the construction of
    $M^G$.
  \item For $\half \varphi$, $\varphi \dotplus \psi$ and $\bigwedge\Phi$, this is immediate from
    the definition of forcing and the induction hypothesis.
  \item For $\lnot\varphi$, this is immediate from \fref{lem:GenSetNeg}
    and the induction hypothesis.
  \item For $\inf_x \varphi(x)$, it follows from the definition of forcing
    and the induction hypothesis that
    $(\inf_x \varphi)^G = \inf\{\varphi(c)^{M^G}\mid c \in C\}$.
    Since $C^{M^G}$ is dense in $M^G$ and $\varphi(x)^{M^G}$ is uniformly
    continuous in $x$, the latter is equal to $(\inf_x \varphi)^{M^G}$.
    \qedhere
  \end{enumerate}
\end{proof}

\section{The forcing Properties   $\mathcal{P}(\mathcal{M})$ and $\mathcal{P}(\mathcal{M},\Sigma)$}
\label{sec:special forcing properties}

If $\mathcal{M}$ is class of $\cL$-structures, we denote by $\mathcal{M}(C)$ the class of all structures of the form $(M, a_c)_{c\in C_0}$, where $M$ is in $\mathcal{M}$  and $C_0$ is a finite subset of $C$; such a structure is regarded naturally as an $\cL(C_0)$-structure by letting $a_c$ be the interpretation of $c$ in $M$, for each $c\in C_0$.

Let $\Sigma$ be a class of formulas of $\cL_A$ that contains all the atomic formulas and is closed under subformulas, and let $\Sigma(C)$ denote the subset of $\cL(C)$ obtained from formulas $\phi$ in $\Sigma$ by replacing finitely many free variables of $\phi$ with constant symbols from $C$.

The forcing property $\mathcal{P}(\mathcal{M},\Sigma)$ is defined as follows. The conditions of $\mathcal{P}(\mathcal{M},\Sigma)$ are the finite sets of the form
\[
\{\,\phi_1<r_1,\dots,\phi_n<r_n\,\},
\]
where $\phi_1,\dots,\phi_n\in\Sigma(C)$ and there exist $M\in \mathcal{M}(C)$ such that $\phi_i^M<r_i$, for $i=1,\dots, n$. The partial order $\leq$ on conditions is reverse inclusion, i.e., if $p,q$ are conditions of $\mathcal{P}(\mathcal{M},\Sigma)$, then $p\leq q$ if and only $p\supseteq q$. If $p$ is a condition of  $\mathcal{P}(\mathcal{M}_\Delta,\Sigma)$ and $\phi$ is an atomic sentence of $\cL(C)$, we define
\[
f_p(\phi)=
\begin{cases}
\min \{r\leq 1 \mid \phi<r\in p\} ,\quad &\text{if $\{r\leq 1 \mid \phi<r\in p\}\neq \emptyset$,}\\
1,\quad&\text{otherwise}.
\end{cases}
\]

When $\Sigma$ is the set of all atomic $\cL$-formulas, the forcing property $\mathcal{P}(\mathcal{M},\Sigma)$  is denoted simply $\mathcal{P}(\mathcal{M})$.

The main result of this section is Proposition~\ref{P:w-forcing in P(M,Sigma)}, below, which characterizes weak forcing for the forcing property $\mathcal{P}(\mathcal{M},\Sigma)$; for the proof, we need two lemmas.

\begin{definition}
  We extend the definition of $f_p$ above to all sentences of $\Sigma(C)$:
  \[
  H_p(\varphi) =
  \begin{cases}
    \min \{r\leq 1 \mid \phi<r\in p\} ,\quad &\text{if $\{r\leq 1 \mid \phi<r\in p\}\neq \emptyset$,}\\
    1,\quad&\text{otherwise}.
  \end{cases}
  \]
  We define $H^w_p$ accordingly:
  $H^w_p(\varphi) = \sup_{q\leq p} \inf_{p\leq q} H_p(\varphi)$.
\end{definition}

Clearly if $q \leq p$ then $H_q(\varphi) \leq H_p(\varphi)$
and $H^w_q(\varphi) \leq H^w_p(\varphi)$, whereby for all $p$:
$H^w_p(\varphi) \leq H_p(\varphi)$.

\begin{lemma}
  \label{lem:WPMSChar}
  For all $p \in \PMS$ and $\varphi \in \Sigma(C)$:
  \begin{align*}
    H^w_p(\varphi) & = \inf \{r \in [0,1] \mid (\forall q \leq p)(q \cup \{\varphi<r\} \in \PMS)\} \\
    & = \sup \{r \in [0,1]\mid p\cup\{\lnot\varphi < 1-r\} \in \PMS\}
  \end{align*}
  (Here $\inf \emptyset = 1$, $\sup \emptyset = 0$.)
\end{lemma}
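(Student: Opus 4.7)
The plan is to prove the two equalities separately. Denote the middle expression by $B$ and the rightmost by $D$. Both arguments amount to semantic unpackings, using that $q \in \PMS$ iff some $M \in \mathcal{M}(C)$ satisfies every strict inequality listed in $q$.

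For $H^w_p(\varphi) = B$, the key observation is that the set $S = \{r \in [0,1] \mid \forall q \leq p,\ q \cup \{\varphi < r\} \in \PMS\}$ is upward closed in $[0,1]$, since any witness $M$ for $q \cup \{\varphi < r\}$ also witnesses $q \cup \{\varphi < r''\}$ whenever $r \leq r''$. To prove $H^w_p(\varphi) \leq B$, take $r > B$; then $r \in S$, and for every $q \leq p$ the refinement $q' := q \cup \{\varphi < r\} \leq q$ satisfies $H_{q'}(\varphi) \leq r$, whence $\sup_{q \leq p} \inf_{q' \leq q} H_{q'}(\varphi) \leq r$. Conversely, if $H^w_p(\varphi) < r$, then for each $q \leq p$ one can find $q' \leq q$ with $H_{q'}(\varphi) < r$; any witness $M$ for $q'$ automatically models $q$ and has $\varphi^M < r$, so $q \cup \{\varphi < r\} \in \PMS$, which shows $r \in S$ and hence $B \leq r$.

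For $B = D$, first reformulate $D$ using $(\lnot\varphi)^M = 1 - \varphi^M$: the set in its definition becomes $\{r : \exists M \in \mathcal{M}(C),\ M \models p \text{ and } \varphi^M > r\}$, whose supremum equals $E := \sup\{\varphi^M \mid M \in \mathcal{M}(C),\ M \models p\}$. The inequality $B \leq E$ is direct: for $r > E$ and any $q \leq p$, a witness $M$ for $q$ also models $p$, so $\varphi^M \leq E < r$ and the same $M$ witnesses $q \cup \{\varphi < r\}$. The reverse $B \geq E$ rests on a \emph{blocking} construction: given $r < E$, pick $M \models p$ with $\varphi^M > r' > r$ and set $q := p \cup \{\lnot\varphi < 1-r'\} \leq p$, a valid condition witnessed by $M$. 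No extension of $q$ can force $\varphi < r$, because any $M' \models q$ satisfies $(\lnot\varphi)^{M'} < 1-r'$, i.e.\ $\varphi^{M'} > r' > r$; hence $q \cup \{\varphi < r\} \notin \PMS$, so $r \notin S$ and $B \geq r$. Letting $r \to E$ yields $B \geq E$.

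The blocking-condition construction for $B \geq E$ is the only nontrivial step; everything else is routine manipulation of strict inequalities, infima, and suprema.
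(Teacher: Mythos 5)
Your proof is correct and follows essentially the same route as the paper: the first equality by directly unpacking $H^w_p(\varphi)=\sup_{q\le p}\inf_{q'\le q}H_{q'}(\varphi)$ against satisfiability of $q\cup\{\varphi<r\}$, and the second via the same blocking condition $p\cup\{\lnot\varphi<1-r'\}$ for one inequality and the observation that failure of $p\cup\{\lnot\varphi<1-r\}\in\PMS$ forces every witness of every $q\le p$ to satisfy $\varphi\le r$ for the other. Your intermediate quantity $E=\sup\{\varphi^M\mid M\models p\}$ and the interpolated $r'$ are only cosmetic refinements of the paper's argument.
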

\begin{proof}
  The first equality is a mere rephrasing:
  $H^w_p(\varphi) \leq r$ if and only if $\inf_{q'\leq q} H_p(\varphi) \leq r$
  for all $q \leq p$, i.e., if and only if
  $q \cup \{\varphi<r\} \in \PMS$ for all $q \leq p$.

  For the second equality:
  Assume first that $q = p\cup\{\lnot\varphi < 1-r\} \in \PMS$.
  Then $q \leq p$ but $q \cup \{\varphi<r\} \notin \PMS$.
  This gives $\geq$.
  Now assume $p\cup\{\lnot\varphi < 1-r\} \notin \PMS$.
  Then $p\cup\{\lnot\varphi < 1-r\}$ cannot be realized in
  the given class.
  Thus, for every $q \leq p$, as $q$ can be realized, it is realized in a
  model where $\varphi \leq r$.
  Thus $q \cup \{\varphi<s\} \in \PMS$ for all $q \leq p$ and $s > r$.
  This gives $\leq$.
\end{proof}

\begin{proposition}
  \label{prp:WPMSInd}
  The functions $H^w_p$ satisfy the properties stated for
  $F^w_p$ in \fref{lem:WForceRobust} and \fref{prp:WForceInd}, i.e.:
  \[
  \begin{array}{lcl}
    H^w_p(\varphi) & = & \sup_{q\leq p} H^w_q(\varphi)
    = \sup_{q\leq p}\inf_{q'\leq q} H^w_{q'}(\varphi) \\
    H^w_p(\lnot\varphi) & = & \lnot \inf_{q\leq p} H^w_q(\varphi) \\
    H^w_p(\half \varphi) & = & \half H^w_p(\varphi) \\
    H^w_p(\varphi \dotplus \psi) & = & \sup_{q\leq p} \inf_{q'\leq q} H^w_{q'}(\varphi) \dotplus H^w_{q'}(\psi) \\
    H^w_p(\bigwedge\Phi) & = & \sup_{q\leq p} \inf_{q'\leq q} \inf_{\varphi\in\Phi} H^w_{q'}(\varphi) \\
    H^w_p(\inf_x \varphi(x)) & = & \sup_{q\leq p} \inf_{q'\leq q} \inf_{c\in C} H^w_{q'}(\varphi(c)).
  \end{array}
  \]
\end{proposition}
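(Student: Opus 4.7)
The plan is to imitate the proof of \fref{prp:WForceInd}, but to replace, at one crucial step, the inductive identities for $F_p$ (which are built into \fref{D:forcing}) with the characterizations of $H^w_p$ provided by \fref{lem:WPMSChar}. The monotonicity $q\leq p \Rightarrow H_q \leq H_p$ is already noted in the text, and the opening identity of the proposition then follows by copying the proof of \fref{lem:WForceRobust} verbatim, with $H$ in place of $F$.

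For each of the four inductive rules, the $\leq$ direction reduces, by exactly the same formal sup-inf exchange as in \fref{prp:WForceInd}, to a single-point inequality such as $H^w_p(\varphi\dotplus\psi) \leq H^w_p(\varphi)\dotplus H^w_p(\psi)$. To prove this one I mimic the argument of \fref{prp:WForceInd}: assuming $H^w_p(\varphi) = r$ and $H^w_p(\psi) = s$, any $q\leq p$ can be refined successively to $q'_0$ containing $\varphi < r+\varepsilon$ and then to $q' \leq q'_0$ containing also $\psi < s+\varepsilon$. Now, however, I cannot invoke any inductive identity for $H$; instead I observe that every model of any further refinement $q''\leq q'$ automatically satisfies $\varphi\dotplus\psi < r+s+2\varepsilon$, so by \fref{lem:WPMSChar} $H^w_{q'}(\varphi\dotplus\psi) \leq r+s+2\varepsilon$, which gives the bound as $\varepsilon \to 0$. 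The remaining three $\leq$ cases are handled by the same template, using the semantic interpretation of $\bigwedge$ as an infimum, $\inf_x$ as an existential, and $\lnot$ via the dual characterization of $H^w_p$ in \fref{lem:WPMSChar}. The $\geq$ direction, which for $F^w$ followed at once from $F_p \geq F^w_p$ combined with the inductive identity for $F_p$, is proved here by the mirror-image procedure: given $t > H^w_p(\text{compound})$ extract from a model of $q\cup\{\text{compound}<t\}$ a refinement of $q$ which forces the components to satisfy sharp bounds, and close with a sup-inf exchange.

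The most delicate case, and the main obstacle, is the clause for $\inf_x\varphi(x)$. A witness for $\inf_x\varphi(x) < t$ in a model $M \in \mathcal{M}(C)$ is only a point $a \in M$, not a named constant of $C$. Because any $M \in \mathcal{M}(C)$ interprets only finitely many constants of $C$, one can pick a $c \in C$ that is unused in $M$ and interpret $c$ as $a$, producing a structure still in $\mathcal{M}(C)$ in which $\varphi(c) < t$. This is the step that allows passage between the syntactic infimum $\inf_{c\in C}$ appearing on the right-hand side of the identity and the semantic $\inf_x$ realized inside a given model, and it is precisely the refinement mechanism anticipated by the $\sup_{q\leq p}\inf_{q'\leq q}$ structure.
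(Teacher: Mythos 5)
Your proposal is correct and follows essentially the same route as the paper: reduce via the robustness identity to the single-point inequalities for $\leq$, establish those from the semantic characterization in \fref{lem:WPMSChar} together with the quantifier-exchange argument, and handle $\lnot$ and $\half$ through \fref{lem:WPMSChar} directly. You in fact supply details the paper compresses into ``reduce as in \fref{prp:WForceInd}'' --- notably the model-theoretic refinement argument for the $\geq$ direction (where the syntactic $H_p$ has no inductive identity to substitute) and the step of naming a witness of $\inf_x\varphi(x)$ by an unused constant of $C$ --- and these elaborations are sound and in the intended spirit.
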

\begin{proof}
  The first property is proved precisely as in
  \fref{lem:WForceRobust}.

  For  $\lnot$: it follows from \fref{lem:WPMSChar} that
  $H^w_p(\lnot\varphi) = \lnot\inf_{q\leq p}H_q(\varphi)$, and we conclude as in
  the proof of \fref{prp:WForceInd}.

  For $\half$: observe that
  $q\cup\{\varphi<r\} \in \PMS$ if and only if
  $q\cup\{\half \varphi < \half r\} \in \PMS$ and apply
  \fref{lem:WPMSChar}.

  For the last three we reduce as in the proof of
  \fref{prp:WForceInd} to showing that:
  \[
  \begin{array}{lcl}
    H^w_p(\varphi \dotplus \psi) & \leq & H^w_p(\varphi) \dotplus H^w_p(\psi) \\
    H^w_p(\bigwedge\Phi) & \leq & \inf_{\varphi\in\Phi} H^w_p(\varphi) \\
    H^w_p(\inf_x \varphi(x)) & \leq & \inf_{c\in C} H^w_p(\varphi(c)).
  \end{array}
  \]
  For $\dotplus$ this follows from \fref{lem:WPMSChar}.
  For $\bigwedge$ and $\inf$ the quantifier exchange argument from
  proof of the corresponding items
  in \fref{prp:WForceInd} works here too.
\end{proof}

\begin{proposition}
\label{P:w-forcing in P(M,Sigma)}
Suppose that $p$ is a condition in the forcing property $\mathcal{P}(\mathcal{M},\Sigma)$ and 
$\sigma$ is a sentence of $\Sigma(C)$. Then $F^w_p(\sigma) = H^w_p(\sigma)$.
\end{proposition}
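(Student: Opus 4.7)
My plan is to prove the equality $F^w_p(\sigma) = H^w_p(\sigma)$ by induction on the structure of the sentence $\sigma \in \Sigma(C)$. The key observation is that Proposition~\ref{prp:WForceInd} and Proposition~\ref{prp:WPMSInd} give identical inductive characterizations of $F^w_p$ and $H^w_p$ respectively in terms of their values on subformulas, so once the two functions agree on the base case (atomic sentences) and we have an inductive hypothesis to apply, the non-atomic cases are essentially free.

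For the base case, let $\sigma$ be an atomic sentence of $\cL(C)$ in $\Sigma(C)$. By the first clause of Definition~\ref{D:forcing} we have $F_p(\sigma) = f_p(\sigma)$, and by the definition of $H_p$ we also have $H_p(\sigma) = f_p(\sigma)$, since the two definitions are literally the same on atomic sentences. Therefore $F_p(\sigma) = H_p(\sigma)$ for every $p$, and taking $\sup_{q\leq p}\inf_{q'\leq q}$ on both sides yields $F^w_p(\sigma) = H^w_p(\sigma)$.

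For the inductive step, I would handle each connective separately, relying on the fact that $\Sigma$ is closed under subformulas, so every proper subformula of $\sigma$ (and, in the quantifier case, every substitution instance $\varphi(c)$ for $c \in C$) lies in $\Sigma(C)$ and is covered by the inductive hypothesis. For $\lnot\varphi$, I use the formula $F^w_p(\lnot\varphi) = \lnot\inf_{q\leq p} F^w_q(\varphi)$ from Proposition~\ref{prp:WForceInd} and the matching formula for $H^w$ from Proposition~\ref{prp:WPMSInd}, then substitute the inductive hypothesis $F^w_q(\varphi) = H^w_q(\varphi)$. The cases of $\half\varphi$, $\varphi \dotplus \psi$, $\bigwedge \Phi$, and $\inf_x\varphi(x)$ are all handled analogously: the two propositions provide identical right-hand sides in terms of $F^w_{q'}$ and $H^w_{q'}$ on subformulas, so the inductive hypothesis converts one expression into the other term by term.

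The only real point to be careful about is making sure the inductive hypothesis is applicable in every case, which reduces to checking that the closure conditions on $\Sigma$ propagate correctly: closure under subformulas handles $\lnot$, $\half$, $\dotplus$, and $\bigwedge$, while closure of $\Sigma(C)$ under substitution of constants from $C$ (built into the definition of $\Sigma(C)$) handles the $\inf_x$ case. There is no genuine obstacle in any of these steps; the proof is essentially the observation that $F^w$ and $H^w$ both obey the same recursive system and have the same initial data on $\Sigma(C)$, so they must coincide there.
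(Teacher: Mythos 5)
Your proposal is correct and follows exactly the paper's own argument: the base case is immediate because $F_p$ and $H_p$ are both defined as $f_p$ on atomic sentences, and the inductive step works because Propositions~\ref{prp:WForceInd} and~\ref{prp:WPMSInd} show that $F^w_p$ and $H^w_p$ obey the same recursive rules. Your additional care about the closure conditions on $\Sigma(C)$ is a reasonable elaboration but does not change the route.
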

 
\begin{proof}
  For atomic $\sigma$ the equality is immediate.
  We then proceed by induction on $\sigma$, noting that
  \fref{prp:WForceInd} on the one hand and \fref{prp:WPMSInd} on the
  other tell us that $F^w_p$ and $H^w_p$ obey the same inductive definitions.
\end{proof}

\section{Generic Models and \sci-Formulas}
\label{sec:omitting types}

Recall from \fref{sec:preliminaries} that the expressions $\bigvee\Phi$ and $\sup_x \varphi$ are regarded abbreviations of $\lnot\bigwedge_{\varphi\in\Phi}\lnot\varphi$ and
$\lnot\inf_x\lnot\varphi$ respectively.

\begin{proposition}
\label{P:disjunctions}
Let $(\bP,\leq,f)$ be a forcing property for $\cL_A(C)$ and let $p\in \bP$.
Then
\begin{enumerate}
\item
$F_p(\bigvee\Phi)=\sup_{\phi\in\Phi}F_p^w(\phi)$.
\item
$F_p(\sup_x\phi(x))=\sup_{c\in C} F_p^w(\phi(c))$.
\end{enumerate}
\end{proposition}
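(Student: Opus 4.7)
The plan is to prove both identities by unfolding the definition of $\bigvee$ and $\sup_x$ as abbreviations and applying the inductive clauses of $F_p$ together with the definition of $F_p^w$. The key observation that makes everything click is the identity
\[
\inf_{q\leq p} F_q(\lnot \varphi) \;=\; \inf_{q\leq p}\bigl(1 - \inf_{r\leq q} F_r(\varphi)\bigr) \;=\; 1 - \sup_{q\leq p}\inf_{r\leq q} F_r(\varphi) \;=\; 1 - F_p^w(\varphi),
\]
which lets us trade an ``inf of negations of forcing'' for the complement of a weak-forcing value.

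For (i), I would write
\[
F_p(\bigvee\Phi) \;=\; F_p\bigl(\lnot \bigwedge_{\varphi\in\Phi}\lnot\varphi\bigr) \;=\; 1 - \inf_{q\leq p} F_q\bigl(\bigwedge_{\varphi\in\Phi}\lnot\varphi\bigr) \;=\; 1 - \inf_{q\leq p}\inf_{\varphi\in\Phi} F_q(\lnot\varphi),
\]
using the clauses for $\lnot$ and $\bigwedge$ in \fref{D:forcing}. I would then exchange the two infima (they commute since each is a genuine infimum) to get
\[
1 - \inf_{\varphi\in\Phi}\inf_{q\leq p} F_q(\lnot\varphi) \;=\; 1 - \inf_{\varphi\in\Phi}\bigl(1 - F_p^w(\varphi)\bigr) \;=\; \sup_{\varphi\in\Phi} F_p^w(\varphi),
\]
where the middle equality is the key identity above.

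For (ii), the argument is formally identical: expanding $\sup_x\varphi = \lnot\inf_x\lnot\varphi$ and using the clauses for $\lnot$ and $\inf_x$ yields
\[
F_p(\sup_x\varphi) \;=\; 1 - \inf_{q\leq p}\inf_{c\in C} F_q(\lnot\varphi(c)),
\]
and the same swap of infima followed by the key identity (applied now to each $\varphi(c)$) gives $\sup_{c\in C} F_p^w(\varphi(c))$. So the proof reduces to two essentially identical quantifier-commutation calculations.

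The only step that requires care rather than being mechanical is justifying the swap of the two infima and, one layer deeper, correctly turning $\inf_q(1 - \inf_{r\leq q} F_r(\varphi))$ into $1 - \sup_q\inf_{r\leq q} F_r(\varphi) = 1 - F_p^w(\varphi)$; this is where sign-tracking is easy to get wrong, but it is a routine manipulation of infima and suprema in $[0,1]$ and not a real obstacle. No appeal to genericity or to the definition of a forcing property is needed beyond the inductive clauses that already define $F_p$ and $F_p^w$.
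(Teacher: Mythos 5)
Your proof is correct and follows essentially the same route as the paper's: unfold the abbreviation, apply the inductive clauses of $F_p$ for $\lnot$, $\bigwedge$ and $\inf_x$, and perform a quantifier exchange. The only cosmetic difference is that you commute the two infima and then invoke the identity $\inf_{q\leq p}F_q(\lnot\varphi)=1-F_p^w(\varphi)$, whereas the paper first pushes the negation through (turning the infima into suprema) and then commutes the two suprema; the underlying calculation is identical.
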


\begin{proof}
The proofs are straightforward applications of the definitions: for~(i),
\begin{align*}
F_p(\bigvee \Phi)= F_p(\lnot\bigwedge_{\phi\in\Phi} \lnot\phi)
&=\lnot \inf_{q\leq p}F_q(\bigwedge_{\phi\in\Phi}\lnot\phi)\\
&=\lnot \inf_{q\leq p}\inf_{\phi\in\Phi}F_q(\lnot\phi)\\
&=\lnot \inf_{q\leq p}\inf_{\phi\in\Phi}\lnot\inf_{q'\leq q} F_{q'}(\phi)\\
&=\sup_{q\leq p}\sup_{\phi\in\Phi}\inf_{q'\leq q} F_{q'}(\phi)\\
&=\sup_{\phi\in\Phi}\sup_{q\leq p}\inf_{q'\leq q} F_{q'}(\phi)\\
&=\sup_{\phi\in\Phi}F_p^w(\phi),
\end{align*}
and for~(ii),
\begin{align*}
F_p(\sup_x\phi(x))= F_p(\lnot\inf_x \lnot\phi(x))&=\lnot\inf_{q\leq p}F_q(\inf_x \lnot\phi(x))\\
&=\lnot\inf_{q\leq p}\inf_{c\in C}F_q(\lnot\phi(c))\\
&=\lnot\inf_{q\leq p}\inf_{c\in C}\lnot\inf_{q'\leq q}F_{q'}(\phi(c))\\
&=\sup_{q\leq p}\sup_{c\in C}\inf_{q'\leq q}F_{q'}(\phi(c))\\
&=\sup_{c\in C}\sup_{q\leq p}\inf_{q'\leq q}F_{q'}(\phi(c))\\
&=\sup_{c\in C}F_{q'}^w(\phi(c)).
\qedhere
\end{align*}
\end{proof}

\begin{notation}
If $\Phi$ is a finite set of formulas, say $\Phi=\{\phi_1,\dots,\phi_n\}$, we write
\[
\phi_1\land\dots\land\phi_n\quad\text{and}\quad \phi_1\lor\dots\lor\phi_n
\]
as abbreviations of $\bigwedge\Phi$ and $\bigvee\Phi$, respectively.
\end{notation}

\begin{proposition}
\label{P:finite disjunctions}
If $(\bP,\leq,f)$ is a forcing property for $\cL_A(C)$ and $p\in \mathbb{P}$, then
\[
F_p(\phi_1\lor\dots\lor\phi_n)=\max_{i}F_p^w(\phi_i).
\]

\end{proposition}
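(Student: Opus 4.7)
The statement is essentially an immediate specialization of Proposition~\ref{P:disjunctions}(i) to finite index sets. By the notation just introduced, $\phi_1\lor\dots\lor\phi_n$ is by definition the abbreviation for $\bigvee\Phi$, where $\Phi=\{\phi_1,\dots,\phi_n\}$. The approach is therefore to apply Proposition~\ref{P:disjunctions}(i) with this finite $\Phi$ and then observe that a supremum over a finite set is a maximum. Concretely, the single computation to record is
\[
F_p(\phi_1\lor\dots\lor\phi_n) \;=\; F_p\!\left(\bigvee\Phi\right) \;=\; \sup_{\phi\in\Phi} F_p^w(\phi) \;=\; \max_{i=1,\dots,n} F_p^w(\phi_i),
\]
the middle equality being Proposition~\ref{P:disjunctions}(i) and the last being finiteness of $\Phi$.

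\textbf{Possible obstacles.} There is no real difficulty. The only technicality worth noting in passing is that, unpacking $\bigvee\Phi=\lnot\bigwedge_{\phi\in\Phi}\lnot\phi$, one needs $\bigwedge_{\phi\in\Phi}\lnot\phi$ to be a legitimate $\cL_A(C)$-formula; but for finite $\Phi$ the relevant modulus $\min_{\phi\in\Phi}\delta_{\lnot\phi,x}(\varepsilon)$ is a finite minimum of strictly positive values and hence strictly positive, so the formula is indeed in $\cL_A(C)$ and Proposition~\ref{P:disjunctions}(i) applies verbatim. Since all of the inductive unpacking was already done in the proof of that proposition, no further calculation is required here.
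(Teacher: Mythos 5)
Your proof is correct and matches the paper's own argument, which likewise derives the result as an immediate specialization of Proposition~\ref{P:disjunctions}(i) to the finite set $\Phi=\{\phi_1,\dots,\phi_n\}$, where the supremum becomes a maximum. The extra remark about well-formedness of the finite conjunction is a harmless (and reasonable) addition not spelled out in the paper.
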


\begin{proof}
By Proposition~\ref{P:disjunctions}.
\end{proof}

\begin{definition}
  Let $\Sigma$ be a class of formulas of $\cL_A$ which contains all atomic
  formulas and is closed under subformulas.
  A \emph{$\sup\bigwedge\inf$-formula over $\Sigma$} is an $\cL_A$-formula of the form
\[
\sup_{x_1}\dots\sup_{x_m}\bigwedge_{n<\omega}\inf_{y_1}\dots \inf_{y_{i(n)}}
(\sigma_{n,1}(\bar x,\bar y_n)\lor \dots \lor \sigma_{n,j(n)}(\bar x,\bar y_n)),
\]
where $\sigma_{n,\nu}$ belongs to $\Sigma$ for $n<\omega$ and $\nu=1,\dots,j(n)$, $\bar x=x_1,\dots,x_m$, and $\bar y_n=y_1,\dots, y_{i(n)}$.
\end{definition}

\begin{proposition}
	\label{P:forcing special formulas}
Let $\Sigma$ be a class of formulas of $\cL_A$ which contains all atomic formulas and is closed under subformulas.
Suppose that $\phi$ is a $\sup\bigwedge\inf$-formula over $\Sigma$, of the form
\[
\sup_{x_1}\dots\sup_{x_m}\bigwedge_{n<\omega}\inf_{y_1}\dots \inf_{y_{i(n)}}
(\sigma_{n,1}(\bar x,\bar y_n)\lor \dots \lor \sigma_{n,j(n)}(\bar x,\bar y_n)),
\]
where $\sigma_{n,\nu}$ belongs to $\Sigma$ for $n<\omega$ and $\nu=1,\dots,j(n)$, $\bar x=x_1,\dots,x_m$, and $\bar y_n=y_1,\dots, y_{i(n)}$.
Then, if $(\bP,\leq,f)$ is a forcing property for $\cL_A(C)$ and $p\in \mathbb{P}$, 
\[
F_p(\phi)=
\sup_{\substack{\bar c\in C^m\\q\leq p}}
\inf_{\substack{q'\leq q\\ \bar d\in C^{i(n)}\\ n<\omega}}
\max_{1\leq\nu\leq j(n)}
F_{q'}^w(\, \sigma_{n,\nu}(\bar c,\bar d)\,).
\]
\end{proposition}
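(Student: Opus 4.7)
The plan is to reduce both sides of the asserted identity to the common intermediate form
\[
\sup_{\bar c\in C^m,\, q\leq p}\inf_{q'\leq q}F_{q'}(\psi(\bar c)),
\]
where $\psi(\bar x) := \bigwedge_{n<\omega}\inf_{\bar y_n}\bigl(\sigma_{n,1}(\bar x,\bar y_n)\lor\dots\lor\sigma_{n,j(n)}(\bar x,\bar y_n)\bigr)$, so that $\phi = \sup_{x_1}\dots\sup_{x_m}\psi$. For the right-hand side of the statement, I would apply a short chain of rewrites using results already in place: Proposition~\ref{P:finite disjunctions} converts $\max_{\nu}F^w_{q'}(\sigma_{n,\nu}(\bar c,\bar d))$ into $F_{q'}(\sigma_{n,1}(\bar c,\bar d)\lor\dots\lor\sigma_{n,j(n)}(\bar c,\bar d))$; the $\inf$-clause of Definition~\ref{D:forcing}, iterated $i(n)$ times, absorbs $\inf_{\bar d\in C^{i(n)}}$ to give $F_{q'}(\inf_{\bar y_n}(\sigma_{n,1}(\bar c,\bar y_n)\lor\dots\lor\sigma_{n,j(n)}(\bar c,\bar y_n)))$; finally the $\bigwedge$-clause absorbs $\inf_{n<\omega}$ to yield $F_{q'}(\psi(\bar c))$.

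For the left-hand side I would prove, by induction on $m$, the auxiliary identity
\[
F_p(\sup_{x_1}\dots\sup_{x_m}\chi(\bar x)) = \sup_{\bar c\in C^m,\, q\leq p}\inf_{q'\leq q}F_{q'}(\chi(\bar c))
\]
for an arbitrary formula $\chi(\bar x)$. The base case $m=1$ follows from Proposition~\ref{P:disjunctions}(ii) together with the unfolding $F^w_p(\chi(c)) = \sup_{q\leq p}\inf_{q'\leq q}F_{q'}(\chi(c))$. For the inductive step $m\to m+1$, I would apply the base case to peel off $\sup_{x_1}$ and then the induction hypothesis to $F_{q'_1}(\sup_{x_2}\dots\sup_{x_{m+1}}\chi(c_1,\bar x'))$, obtaining the nested expression
\[
\sup_{c_1,\, q_1\leq p}\,\inf_{q'_1\leq q_1}\,\sup_{\bar c',\, r\leq q'_1}A(c_1\bar c',\,r),\qquad A(\bar c,q) := \inf_{q'\leq q}F_{q'}(\chi(\bar c)).
\]

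The main obstacle is then to collapse this nested $\sup$-$\inf$-$\sup$-$\inf$ expression to $\sup_{\bar c,\, q\leq p}A(\bar c,q)$. The decisive observation is that $A(\bar c,\cdot)$ is order-reversing in its condition argument: if $r\leq q$ then $\{q': q'\leq r\}\subseteq\{q': q'\leq q\}$, so $A(\bar c,r)\geq A(\bar c,q)$. Given this, the inequality $\leq$ is immediate, since every $r$ in the nested expression satisfies $r\leq p$. For the reverse inequality, given any target $(c_1^*,\bar c'^*, q^*\leq p)$, I would set $c_1 := c_1^*$ and $q_1 := q^*$, and for each $q'_1\leq q^*$ take $r := q'_1$ and $\bar c' := \bar c'^*$; the monotonicity of $A$ then gives $A(c_1^*\bar c'^*, q'_1)\geq A(c_1^*\bar c'^*, q^*)$, and the inequality propagates through the outer suprema and infimum.
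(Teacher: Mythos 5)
Your proposal is correct and follows essentially the same route as the paper: both sides are reduced to the common intermediate $\sup_{\bar c,\,q\leq p}\inf_{q'\leq q}F_{q'}(\psi(\bar c))$ using Propositions~\ref{P:disjunctions} and~\ref{P:finite disjunctions} together with the $\bigwedge$- and $\inf$-clauses of Definition~\ref{D:forcing}. The one point where you go beyond the paper is welcome: the paper passes from $F_p(\sup_{x_1}\dots\sup_{x_m}\psi)$ to $\sup_{\bar c\in C^m}F^w_p(\psi(\bar c))$ by citing Proposition~\ref{P:disjunctions} without comment, whereas for $m\geq 2$ this genuinely requires the $\sup$--$\inf$--$\sup$--$\inf$ collapse you carry out, and your argument for it (monotonicity of $A(\bar c,\cdot)$ plus choosing $r:=q_1'$) is valid.
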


\begin{proof}
We use Propositions~\ref{P:disjunctions} and \ref{P:finite disjunctions} to compute $F_p(\phi)$:
\begin{align*}
&F_p(\,\sup_{x_1}\dots\sup_{x_m}\bigwedge_{n<\omega}\inf_{y_1}\dots \inf_{y_{i(n)}}
(\sigma_{n,1}(\bar x,\bar y_n)\lor \dots \lor \sigma_{n,j(n)}(\bar x,\bar y_n)\, )) & \\
=&\sup_{\bar c\in C^m} 
F_p^w(\, \bigwedge_{n<\omega}\inf_{y_1}\dots \inf_{y_{i(n)}}
(\sigma_{n,1}(\bar c,\bar y_n)\lor \dots \lor \sigma_{n,j(n)}(\bar c,\bar y_n))\,) &\text{(by \ref{P:disjunctions})}\\
=&\sup_{\bar c\in C^m} \sup_{q\leq p}\inf_{q'\leq q}
F_{q'}(\, \bigwedge_{n<\omega}\inf_{y_1}\dots \inf_{y_{i(n)}}
(\sigma_{n,1}(\bar c,\bar y_n)\lor \dots \lor \sigma_{n,j(n)}(\bar c,\bar y_n))\,) & \\
=&\sup_{\bar c\in C^m} \sup_{q\leq p}\inf_{q'\leq q}\inf_{n<\omega}\inf_{\bar d\in C^{i(n)}}
F_{q'}(\,\sigma_{n,1}(\bar c,\bar d)\lor \dots \lor \sigma_{n,j(n)}(\bar c,\bar d)\,) & \\
=&\sup_{\bar c\in C^m} \sup_{q\leq p}\inf_{q'\leq q}\inf_{n<\omega}\inf_{\bar d\in C^{i(n)}}\max_{1\leq\nu\leq j(n)}
F_{q'}^w(\, \sigma_{n,\nu}(\bar c,\bar d)\,)&\text{(by \ref{P:finite disjunctions})}.
\end{align*}
\end{proof}

\begin{remark}
\label{R:forcing special formulas}
If $p$ is a condition in the forcing property $\mathcal{P}(\mathcal{M},\Sigma)$, then $F_p^w(\phi)=H_p^w(\phi)$, by Proposition~\ref{P:w-forcing in P(M,Sigma)}. Hence, if $\phi$ is as in Proposition~\ref{P:forcing special formulas},
\[
F_p(\phi)=
\sup_{\substack{\bar c\in C^m\\q\leq p}}
\inf_{\substack{q'\leq q\\ \bar d\in C^{i(n)}\\ n<\omega}}
\max_{1\leq\nu\leq j(n)}
H_{q'}^w(\, \sigma_{n,\nu}(\bar c,\bar d)\,).
\]
\end{remark}

Recall (\fref{sec:preliminaries}) that $C$ denotes countable set of constants not in $\cL$ and that $\cL(C)=\cL\cup C$. As in Section~\ref{sec:special forcing properties}, if $\mathcal{M}$ is a class of $\cL$-structures, $\mathcal{M}(C)$ denotes the class of structures of the form $(M, a_c)_{c\in C_0}$, where $M$ is in $\mathcal{M}$  and $C_0$ is a finite subset of $C$.  If $\Gamma$ is a set of inequalities of the form $\phi<r$, where  $\phi$ is an $\cL_A(C)$-formula and $r$ is a real number, we will say that $\Gamma$ is \emph{satisfiable} in $\mathcal{M}$ if there exists a structure $M$ in $\mathcal{M}(C)$ such that $\phi^M<r$ for every inequality $\phi<r$ in $\Gamma$. 

Let $\Sigma$ be class of formulas of $\cL_A$ that contains all the atomic formulas and is closed under subformulas. A \emph{finite $\Sigma$-piece of $\mathcal{M}$} is a finite set $p$ of inequalities of the form $\phi<r$, where $\phi\in\Sigma$, such that $p$ that is satisfiable in $\mathcal{M}$.

\begin{corollary}[Omitting Types Theorem]
  \label{cor:omitting types}
  Let $(\,\phi_n\mid n<\omega\,)$ be a sequence of $\cL$-formulas such that for
  each $n<\omega$ $\phi_n$ is a $\sup\bigwedge\inf$-formula over $\Sigma$, of the form
  \begin{gather*}
    \sup_{x_1}\dots\sup_{x_{m(n)}}\psi_n(x_1,\dots,x_{m(n)}),
  \end{gather*}
  where for each $n<\omega$ $\psi_n$ is of the form
  \[
  \bigwedge_{k<\omega}\inf_{y_1}\dots \inf_{y_{i(n,k)}}
  (\sigma_{n,k,1}(\bar x_n,\bar y_{n,k})\lor \dots \lor \sigma_{n,k,j(n,k)}(\bar x_n,\bar y_{n,k})),
  \]
  with $\bar x_n=x_1,\dots,x_{m(n)}$, and
  $\bar y_{n,k}=y_1,\dots,y_{i(n,k)}, $ and let  $(\,r_n\mid n<\omega\,)$
  be a sequence of real numbers such that for every finite
  $\Sigma$-piece $p$ of $\mathcal{M}$ and every $\bar c_n\in C^{m(n)}$,
  the set  $p\cup\{\psi_n(\bar c_n)<r_n\}$ is satisfiable in $\mathcal{M}$.
  Then there exists a canonical $\cL(C)$-structure $M$ such that
  $\phi_n^M\leq r_n$ for every $n<\omega$.
\end{corollary}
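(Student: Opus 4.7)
My plan is to invoke the forcing property $\cP(\cM,\Sigma)$ together with the explicit formula for $F_p$ on $\sup\bigwedge\inf$-formulas supplied by Remark~\ref{R:forcing special formulas}. The strategy is to establish the uniform bound $F_p(\phi_n) \leq r_n$ for every condition $p \in \cP(\cM,\Sigma)$ and every $n<\omega$; once this is in hand, Proposition~\ref{P:generic set} provides a generic set $G$, and by Lemma~\ref{L:generic model} and Theorem~\ref{T:generic model} the canonical structure $M^G$ satisfies
\[
\phi_n^{M^G} \;=\; \phi_n^G \;=\; \inf_{p\in G} F_p(\phi_n) \;\leq\; r_n
\]
for every $n$, which is precisely what is required.

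The heart of the argument is a density lemma: for every $p \in \cP(\cM,\Sigma)$, every $n<\omega$, every $\bar c \in C^{m(n)}$, and every $\varepsilon > 0$, there exist $q \leq p$, some $k<\omega$, and $\bar d \in C^{i(n,k)}$ such that $H_q(\sigma_{n,k,\nu}(\bar c,\bar d)) < r_n + \varepsilon$ for all $\nu = 1,\ldots,j(n,k)$. To prove it I would apply the hypothesis of the corollary to the finite $\Sigma$-piece $p$ together with the tuple $\bar c$: this yields a structure $M \in \cM(C)$ realizing $p$ in which $\psi_n(\bar c)^M < r_n$. Unwinding the definition of $\psi_n$ as $\bigwedge_k \inf_{\bar y}\bigvee_\nu \sigma_{n,k,\nu}$, for any fixed $k$ (say $k=0$) one obtains $\bar b \in M^{i(n,k)}$ with $\max_\nu \sigma_{n,k,\nu}(\bar c,\bar b)^M < r_n$. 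Only finitely many constants of $C$ are interpreted in $M$ (namely those occurring in $p$ and $\bar c$), so fresh constants $\bar d \in C^{i(n,k)}$ are available; interpreting them as $\bar b$ produces another element of $\cM(C)$ realizing $q := p \cup \{\sigma_{n,k,\nu}(\bar c,\bar d) < \min(r_n + \varepsilon/2, 1) : 1\leq\nu\leq j(n,k)\}$. Hence $q \in \cP(\cM,\Sigma)$, $q \leq p$, and $H_q(\sigma_{n,k,\nu}(\bar c,\bar d)) < r_n + \varepsilon$ for every $\nu$; the case $r_n \geq 1$ is trivial since $F_p(\phi_n) \leq 1$ always.

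With the density lemma in place, the uniform bound drops out of Remark~\ref{R:forcing special formulas}: for any $\bar c \in C^{m(n)}$ and any $q \leq p$, applying the density lemma to $q$ gives $q' \leq q$, $k$, and $\bar d$ with $\max_\nu H^w_{q'}(\sigma_{n,k,\nu}(\bar c,\bar d)) \leq \max_\nu H_{q'}(\sigma_{n,k,\nu}(\bar c,\bar d)) < r_n + \varepsilon$, so the innermost infimum in the expression for $F_p(\phi_n)$ is $\leq r_n + \varepsilon$; the outer supremum over $\bar c$ and $q \leq p$ is therefore also $\leq r_n + \varepsilon$, and letting $\varepsilon \to 0$ yields the bound. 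The only genuinely nontrivial ingredient is the density lemma, and the one delicate point there is checking that $\cM(C)$ is closed under expansion by interpretations of finitely many fresh constants from $C$—an immediate consequence of its definition. Everything else is a mechanical assembly of the machinery developed in the preceding sections.
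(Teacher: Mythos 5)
Your proposal is correct and takes essentially the same route as the paper's proof: you bound $F_p(\phi_n)$ by $r_n$ uniformly over conditions of $\PMS$ using Remark~\ref{R:forcing special formulas} together with Lemma~\ref{lem:WPMSChar} (your ``density lemma'' is precisely the paper's step of extending $q$ to $q'=q\cup\{\sigma_{n,k,\nu}(\bar c,\bar d)<r_n \mid \nu\}$ by reading witnesses off a model of $q\cup\{\psi_n(\bar c)<r_n\}$ and naming them by fresh constants), and then conclude via Proposition~\ref{P:generic set}, Lemma~\ref{L:generic model} and Theorem~\ref{T:generic model}. The one inaccuracy is the parenthetical ``for any fixed $k$ (say $k=0$)'': since $\bigwedge_k$ is an infimum, $\psi_n(\bar c)^M<r_n$ only produces \emph{some} $k$ with $\inf_{\bar y}\max_\nu\sigma_{n,k,\nu}(\bar c,\bar y)^M<r_n$, but this is harmless because $k$ is existentially quantified both in your density lemma and in the innermost infimum of the expression for $F_p(\phi_n)$.
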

\begin{proof}
  Let $p$ be a condition  in the forcing property
  $\mathcal{P}(\mathcal{M},\Sigma)$.
  Fix a condition $q \leq p$,  $n<\omega$ and $\bar c_n\in C^{m(n)}$. 
Since $q\cup\{\psi_n(\bar c_n)<r_n\}$ is satisfiable in $\mathcal{M}$, there exist $k<\omega$ and $\bar d_{n,k}\in C^{i(n,k)}$ such that
\[
q\cup\{\, \sigma_{n,k,1}(\bar c_n,\bar d_{n,k})\lor \dots \lor \sigma_{n,k,j(n,k)}(\bar c_n,\bar d_{n,k})<r_n \,\}
\]
is satisfiable in $\mathcal{M}$. Let
\[
q'=q\cup\{\, \sigma_{n,k,1}(\bar c_n,\bar d_{n,k})<r_n, \dots, \sigma_{n,k,j(n,k)}(\bar c_n,\bar d_{n,k})<r_n \,\}.
\]
Then, $q$ is a condition in $\mathcal{P}(\mathcal{M},\Sigma)$, and by \fref{lem:WPMSChar},
\[
\max_{1\leq\nu\leq j(n,k)}H_{q'}^w(\, \sigma_{n,k,\nu}(\bar c_n,\bar d_{n,k})\,)< r_n,
\]
so
\[
\inf_{\substack{q'\leq q\\ \bar d\in C^{i(n,k)}\\ k<\omega}}
\max_{1\leq\nu\leq j(n,k)}
H_{q'}^w(\, \sigma_{n,k,\nu}(\bar c_n,\bar d_{n,k}) \,)< r_n.
\]
Thus, by Remark~\ref{R:forcing special formulas}, $F_p(\phi_n)\leq r_n$. Let $G$ be a generic set for $\mathcal{P}(\mathcal{M},\Sigma)$ (the existence of $G$ is guaranteed by Proposition~\ref{P:generic set}). For every $n<\omega$, $\phi_n^G=\inf_{p\in G} F_p(\phi_n)\leq r_n$ (see Definition~\ref{dfn:Generic}). Let now $M^G$ be as in Lemma~\ref{L:generic model}. Then, by Theorem~\ref{T:generic model}, $\phi_n^{M^G}=\phi_n^G\leq r_n$.
\end{proof}

\begin{remark}
  The reader may worry about the fact that the assumptions of
  \fref{cor:omitting types} involve strict inequalities while the
  conclusion only yields a weak inequality.
  In fact it would be enough to assume a weak inequality,
  i.e., that
  $p \cup \{ \psi_n(\bar c_n) \leq r_n\}$ is satisfiable in $\cM$ for every
  $p$ and $\psi_n$ as in the statement of \fref{cor:omitting types},
  or more precisely, that $p \cup \{ \psi_n(\bar c_n) < r_n + \varepsilon\}$ is
  satisfiable for every $\varepsilon > 0$.
  Indeed, in this case we would be able to find $M$ in which
  $\varphi_n^M \leq r_n + 2^{-m}$ for all $n,m$, i.e., such that
  $\varphi_n^M \leq r_n$ for all $n$.
\end{remark}

\section{Application: Separable Quotients}
\label{sec:separable quotients}

If $\phi$ is a formula of $\cL_{\omega_1,\omega}$, we will say that $\phi$ is
\emph{finitary} if all the occurrences of $\bigwedge$ in $\phi$ are finitary,
i.e., if whenever $\bigwedge_{\psi\in\Phi} \psi$ is a subformula of $\phi$, the set $\Phi$ is
finite.
We recall from \fref{sec:preliminaries} that the set of all finitary
formulas is denoted $\cL_{\omega,\omega}$.

If $M$ and $N$ are $\cL$-structures, $M$ and $N$ are said to be \emph{elementary equivalent}, written $M\equiv N$, if $\phi^M=\phi^N$ for every finitary $\cL$-sentence $\phi$. Thus $M\equiv N$ if and only if  $\phi^M<r$ implies $\phi^N<r$ for every finitary $\cL$-sentence $\phi$ and every rational number $r$. If $M$ is a substructure of $N$, $M$ is said to be an \emph{elementary substructure} of $N$ if $(M,a \mid a\in M)\equiv(N,a\mid a\in M)$.

A Banach space $(X,\|\cdot\|)$ can be regarded as a metric structure in a number of ways. A natural approach is to introduce for each nonnegative rational $r$ a distinct sort for the closed ball $B_X(r)$ of radius $r$ around $0$; the metric on $B_X(r)$ is given by the norm $\|\cdot\|$; in the structure we also include:
\begin{itemize}
\item
  the inclusion maps $I_{r,s}\colon B_X(r)\to B_X(s)$ for $r<s$,
\item
  the vector addition, which maps $B_X(r)\times B_X(s)$ onto $B_X(r+s)$,
\item
  for each $\lambda\in\mathbb{Q}$, the scalar multiplication by $\lambda$, which maps $B_X(r)$ onto $B_X(|\lambda|r)$,
\item
  the normalized norm predicate $\|\cdot\|/r$, which maps $B_X(r)$ onto the
  interval $[0,1]$,
\item the normalized distance predicate on $B_X(r)$ defined by
  $d(x,y) = \|x-y\|/(2r)$ (as $x-y \in B_X(2r)$).
\end{itemize}

Notice that with the normalized norm and distance, all symbols are
$1$-Lipschitz, meaning that the identity function $\delta(\epsilon)=\epsilon$ is a
modulus of uniform continuity for each and every one of them.

Other ways of regarding Banach space as metric structures are
discussed in Section~3 of~\cite{BenYaacov-Usvyatsov:localstability}
and in Section~4 \cite{BenYaacov:Perturbations}.

If $X$ and $Y$ are Banach spaces and $T\colon X\to Y$ is a Banach space
operator, we denote by $(X,Y,T)$ the structure that
includes, in addition to the Banach space structure of $X$
and the Banach space structure on $Y$, in separate sorts,  the operator $T$ as
a family of functions between the appropriate sorts, i.e.,
from $B_X(r)$ to $B_Y(s)$ if $s \geq \|T\|r$.
If $T$ is nonzero, the function $\delta(\epsilon)=\epsilon/\|T\|$ is a modulus
of uniform continuity for $T$.

\begin{proposition}
\label{P:surjectivity}
Let $X,Y,\Hat{X},\Hat{Y}$ be Banach spaces and let $T\colon X\to Y$ and $\Hat{T}\colon\Hat{X}\to\Hat{Y}$ be bounded linear operators such that $(X,Y,T)\equiv(\Hat{X},\Hat{Y},\Hat{T})$. Then $T$ is surjective if and only if $\Hat{T}$ is surjective.
\end{proposition}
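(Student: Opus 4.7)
The plan is to characterize surjectivity by a countable family of finitary first-order sentences and then apply elementary equivalence. For each rational $c>0$, consider the sentence
\[
\sigma_c \;:=\; \sup_{y \in B_Y(1)} \inf_{x \in B_X(c)} \|y - Tx\|,
\]
where the outer norm is replaced by its appropriate normalization so that $\sigma_c$ takes values in $[0,1]$. Each $\sigma_c$ is a finitary $\cL$-sentence (built from quantifiers over two sorts and a single atomic predicate applied to a term), and $\sigma_c^{(X,Y,T)}=0$ if and only if $T(B_X(c))$ is dense in $B_Y(1)$.

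The main Banach-theoretic step is to show that $T$ is surjective if and only if $\sigma_c^{(X,Y,T)}=0$ for \emph{some} rational $c>0$. The forward direction is immediate from the open mapping theorem: if $T$ is surjective, then $T$ is open, so some $c$ satisfies $T(B_X(c)) \supseteq B_Y(1)$, forcing $\sigma_c^{(X,Y,T)}=0$. For the converse I would invoke the standard telescoping argument behind the proof of the open mapping theorem: if $T(B_X(c))$ is dense in $B_Y(1)$, then iteratively approximating $y\in B_Y(1)$ and summing the corresponding preimages produces, for every $c'>c$, an $x \in B_X(c')$ with $Tx=y$; by linearity and scaling this forces $T$ to be surjective onto $Y$. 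The same equivalence applies verbatim to $(\Hat{X},\Hat{Y},\Hat{T})$ using the same family $\{\sigma_c\}$.

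The remaining model-theoretic step is a one-line application of elementary equivalence. Since each $\sigma_c$ is a finitary $\cL$-sentence, $(X,Y,T) \equiv (\Hat{X},\Hat{Y},\Hat{T})$ gives $\sigma_c^{(X,Y,T)} = \sigma_c^{(\Hat{X},\Hat{Y},\Hat{T})}$ for every rational $c>0$. Thus some $\sigma_c$ vanishes in $(X,Y,T)$ if and only if the same $\sigma_c$ vanishes in $(\Hat{X},\Hat{Y},\Hat{T})$, and combining this with the previous step yields the proposition.

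The main obstacle, beyond the bookkeeping of normalizing the predicate inside $\sigma_c$ so that the formula genuinely lies in $\cL_{\omega,\omega}$ (keeping track of which ball sort contains $y - Tx$), is the ``density implies surjectivity'' implication, which is the classical telescoping argument in the proof of the open mapping theorem. This uses only linearity, completeness of $Y$, and boundedness of $T$, so no new ideas beyond standard functional analysis are required.
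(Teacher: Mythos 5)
Your proposal is correct and follows essentially the same route as the paper: both encode the quantitative surjectivity criterion from the proof of the Open Mapping Theorem as a countable family of finitary continuous sentences whose vanishing transfers across the elementary equivalence (the paper shrinks the $y$-ball to radius $\delta(\epsilon)$ with tolerance $\epsilon$, while you enlarge the $x$-ball to radius $c$ and ask for density of $T(B_X(c))$ in $B_Y(1)$, an equivalent parametrization). One minor correction: the telescoping step uses completeness of $X$ (to sum the series of approximate preimages) together with continuity of $T$, rather than completeness of $Y$.
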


\begin{proof}
For a real number $r\geq 0$, let $B_X(r)$ and $B_Y(r)$ denote the closed balls of radius $r$ around $0$ in $X$ and $Y$, respectively. The proof of the Open Mapping Theorem shows that $T\colon X\to Y$ is surjective if and only if the following holds: for every $\epsilon>0$ there exists $\delta(\epsilon)>0$ such that
\[
\forall y\in B_Y(\delta(\epsilon)) \ \exists x\in B_X(1) \ (\,\|T(x)-y\|\leq \epsilon \,).
\]
We can assume that $\delta(\epsilon)<1$ for $\epsilon<1$. Thus, $T$ is surjective if and only if for every $\epsilon$ with $0<\epsilon<1$ we have $\phi_\epsilon^{(X,Y,T)}=0$, where $\phi_\epsilon$ is the following finitary sentence (the variables $x$ and $y$ are of sort $B_X(1)$ and $B_Y(1)$, respectively):
\[
\sup_y \ 
\inf_x \ \min\big(\ \delta(\epsilon)\dotminus \|y\|,\|T(x)- y\|\dotminus \epsilon \ \big).
\]
(Or, if one wishes to be pedantic, replace
$\|T(x)- y\|\dotminus \epsilon$ with $\frac{1}{2}\|T(x)-y\| \dotminus \frac{\epsilon}{2}$.)
Hence, if $(X,Y,T)\equiv(\Hat{X},\Hat{Y},\Hat{T})$ and $T$ is surjective, $\Hat{T}$ is surjective too.
\end{proof}

The authors are grateful to William B.~Johnson for pointing out that Proposition~\ref{P:surjectivity} is given by the proof of the Open Mapping Theorem.

All the Banach spaces mentioned henceforth will be infinite dimensional.

The Separable Quotient Problem is perhaps the most prominent open problem in nonseparable Banach space theory. The question is whether for every nonseparable Banach space $X$ there exist a separable Banach space $Y$ and a surjective operator $T\colon X\to Y$. Let $T\colon X\to Y$ be a Banach space operator and consider the structure $(X, Y, T)$ (the sorts of this structure are $X$ and $Y$).  In this section we use Corollary~\ref{cor:omitting types} to prove that if $T\colon X\to Y$ is surjective and has infinite dimensional kernel, then there exists an operator $\Hat{T}\colon \Hat{X}\to \Hat{Y}$ such that
\begin{enumerate}
\item
$(X,Y,T)\equiv(\Hat{X},\Hat{Y},\Hat{T})$,
\item
$\Hat{X}$ has density character $\omega_1$,
\item
$\Hat{Y}$ is separable. 
\end{enumerate}
It follows from (i) and Proposition~\ref{P:surjectivity} that  $\Hat{T}$ is surjective.

\begin{lemma}
  \label{L:Hahn-Banach}
  Suppose that $X$ is a Banach space and $Y$ is a	closed proper subspace
  of $X$.
  Then there exists a non-zero linear functional $f\colon X\to\mathbb{R}$
  whose restriction to $Y$ is zero.
  Up to multiplication by a scalar we may further assume that
  $\|f\| = 1$.
\end{lemma}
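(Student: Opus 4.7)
The plan is to reduce this to a standard application of the Hahn--Banach extension theorem. Since $Y$ is a proper closed subspace, I would begin by choosing some vector $x_0 \in X \setminus Y$, and set $d = \dist(x_0, Y)$; closedness of $Y$ gives $d > 0$, which is the quantitative input needed to build a bounded functional.

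Next I would define a linear functional $f_0$ on the algebraic direct sum $Z := Y \oplus \mathbb{R} x_0$ by the formula $f_0(y + \lambda x_0) = \lambda$, for $y \in Y$ and $\lambda \in \mathbb{R}$. This is well defined because $x_0 \notin Y$, obviously vanishes on $Y$, and is nonzero since $f_0(x_0) = 1$. To see it is bounded on $Z$, for $\lambda \neq 0$ one writes
\[
\|y + \lambda x_0\| = |\lambda|\,\bigl\|x_0 - (-y/\lambda)\bigr\| \geq |\lambda|\, d,
\]
so $|f_0(y + \lambda x_0)| = |\lambda| \leq \|y + \lambda x_0\|/d$, which gives $\|f_0\|_Z \leq 1/d < \infty$.

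At this point I apply the Hahn--Banach theorem in its norm-preserving form to extend $f_0$ to a bounded linear functional $f\colon X \to \mathbb{R}$ with $\|f\| = \|f_0\|_Z$. The extension still vanishes on $Y$ (since its restriction to $Y \subseteq Z$ agrees with $f_0 \rest Y = 0$), and it is nonzero because $f(x_0) = 1$. Finally, dividing by $\|f\|$ yields a functional of norm exactly $1$, still zero on $Y$.

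The only ``obstacle'' is conceptual: the result is essentially a restatement of the separation form of Hahn--Banach, so the only genuine work is confirming that the ambient framework is real Banach spaces (as set up in the paper's Banach-space sort structure) and that no issue arises from the multi-sorted presentation. Since the linear functional can be defined once on $X$ as a whole (and then restricted to each ball $B_X(r)$ as needed), no sort-theoretic subtlety appears.
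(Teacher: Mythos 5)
Your proof is correct and is precisely the standard Hahn--Banach argument (quotient-distance functional on $Y \oplus \mathbb{R}x_0$, norm-preserving extension, normalization) that the paper itself does not spell out but simply cites to a functional analysis textbook. No discrepancy.
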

\begin{proof}
  This is a well-known application of the Hahn-Banach theorem;  the proof  can be found in a textbook, e.g.,~\cite{Fabian-Habala-Hajek-Montesinos-Pelant-Zizler:2002}.
\end{proof}

\begin{lemma}
  \label{L:sequence}
  If $X$ is a Banach space of density character $\kappa$, there exists a
  family $(x_i)_{i<\kappa}$ in $X$ such that $\|x_i\|=1$ for every $i<\kappa$ and
  $\|x_i-x_j\|\geq 1$ for $i<j<\kappa$.
\end{lemma}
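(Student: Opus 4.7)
The plan is to construct $(x_i)_{i<\kappa}$ by transfinite induction on $i$. Start with any unit vector $x_0\in X$. At stage $i<\kappa$, assuming $(x_j)_{j<i}$ have already been chosen with the required properties, let $Y_i$ denote the closed linear span of $\{x_j:j<i\}$. Since $Y_i$ is the closure of the linear span of a set of cardinality $|i|<\kappa$, its density character is strictly less than $\kappa$ (as density character is stable under closure and is $\leq|i|\cdot\aleph_0$). Hence $Y_i$ is a proper closed subspace of $X$, and Lemma~\ref{L:Hahn-Banach} supplies a functional $f_i\in X^*$ with $\|f_i\|=1$ and $f_i|_{Y_i}=0$.

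Using the definition of $\|f_i\|=1$, pick $x_i\in X$ with $\|x_i\|=1$ and $f_i(x_i)$ as close to $1$ as desired. Since $f_i(x_j)=0$ for every $j<i$, we get
\[
\|x_i-x_j\|\geq |f_i(x_i-x_j)|=|f_i(x_i)|,
\]
so $\|x_i-x_j\|$ can be forced to be arbitrarily close to $1$ uniformly in $j<i$. This is the crux of the construction: Hahn-Banach separation plus the transfinite density-character count, producing a well-separated family indexed by the full ordinal $\kappa$.

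The main subtlety is achieving the sharp bound $\|x_i-x_j\|\geq 1$ rather than merely $>1-\varepsilon$: a norm-one functional need not attain its norm on the unit sphere, so one cannot in general force $f_i(x_i)=1$ outright. This gap can be closed either by invoking Kottman's theorem (every infinite-dimensional Banach space admits a $1$-separated sequence of unit vectors) together with the transfinite iteration above for uncountable $\kappa$, or, where only a fixed positive separation is needed downstream, by settling for the immediate estimate $\|x_i-x_j\|\geq \tfrac{1}{2}$ obtained by choosing $f_i(x_i)\geq\tfrac{1}{2}$ at each stage. In either case the construction proceeds without further obstruction because at each of the $\kappa$ stages the subspace $Y_i$ has density strictly below $\kappa$, keeping Hahn-Banach applicable throughout the transfinite recursion.
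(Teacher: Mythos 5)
Your construction is the same as the paper's: transfinite recursion, the closed linear span $Y_i$ of the predecessors is a proper closed subspace (by a density character count), and Lemma~\ref{L:Hahn-Banach} supplies a norm-one functional vanishing on $Y_i$. The divergence is only in the last step, and the subtlety you flag is genuine: the paper's proof simply takes ``an element $x_j$ of the unit sphere such that $|f(x_j)|=\|f\|=1$'', i.e., it assumes that $f$ attains its norm, which is unjustified in a general (non-reflexive) Banach space. So you have not introduced a gap; you have located one that the paper's own proof glosses over. What the Hahn--Banach argument honestly yields is $\|x_i-x_j\|\geq 1-\varepsilon$ uniformly in $j<i$, for any prescribed $\varepsilon>0$. (Two small side points: when $\kappa=\aleph_0$ the cardinal count $|i|\cdot\aleph_0<\kappa$ fails, but then $Y_i$ is finite dimensional while $X$ is infinite dimensional, so it is still proper --- and in fact proximinal, so in the countable case the sharp constant $1$ \emph{is} attainable by minimizing $\operatorname{dist}(z,Y_i)$ instead of using a functional.)

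Your proposed repairs deserve comment. Kottman's theorem does not close the gap for the statement as written: it produces a countable $(>1)$-separated sequence, but at every stage $i\geq\omega$ of the recursion the subspace $Y_i$ is an infinite-dimensional closed subspace, distances to which need not be attained, so the exact constant $1$ remains out of reach there by this method. Settling for $1/2$ (or $1-\varepsilon$) proves a weaker lemma. However, the weaker lemma is all that is used downstream: in the proof of Theorem~\ref{T:separable quotient}, Lemma~\ref{L:sequence} serves only to witness finite satisfiability of the conditions $\lnot\|c_n\|<\varepsilon$ and $\lnot\half\|c_m-c_n\|<\half+\varepsilon$, i.e., $\|c_n\|>1-\varepsilon$ and $\|c_n-c_m\|>1-2\varepsilon$ for rational $\varepsilon>0$, and the $(1-\varepsilon)$-separated families your argument produces (with $\varepsilon$ chosen according to the finite piece at hand) suffice for that, via Corollary~\ref{cor:omitting types}. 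So your version is correctly proved and adequate for the application, while the statement with the sharp constant $\geq 1$ would require an additional argument that neither you nor the paper supplies.
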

\begin{proof}
  The construction of  $(x_i)_{i<\kappa}$ is inductive.
  Fix $j<\kappa$  and suppose that constructed $x_i$ is defined for $i<j$.
  Let $Y$ be the closed linear span of $\{ x_i \mid i<j\}$, which is
  a proper closed subspace of $X$.
  By Lemma~\ref{L:Hahn-Banach}, take $f\colon X\to\mathbb{R}$ such that $f(x)=0$ for every $x\in Y$ and $\|f\|=1$. Let now $x_j$ be an element of the unit sphere of $X$ such that $|f(x)|=\|f\|=1$. Then, if $y\in Y$, we have $\|x_j-y\|\geq |f(x_j)-f(y)|=1$; in particular, $\|x_j-x_i\|\geq 1$ for $i<j$.
\end{proof}

\begin{theorem}
  \label{T:separable quotient}
  For every surjective operator $T\colon X\to Y$ with infinite dimensional kernel there exists an operator $\Hat{T}\colon \Hat{X}\to \Hat{Y}$ such that
  \begin{itemize}
  \item
    $(X, Y,T)\equiv(\Hat{X},\Hat{Y},\Hat{T})$,
  \item
    $\Hat{X}$ has density character $\omega_1$,
  \item
    $\Hat{Y}$ is separable. 
  \end{itemize}
  Furthermore, if $D$ is a given countable subset of $X$, the
  structure $(\Hat{X},\Hat{Y},\Hat{T})$ can be chosen with the
  following property: there exists a separable subspace $X_0$ of $X$
  such that $D\subseteq X_0$ and if $T_0$ denotes the restriction of $T$ to
  $X_0$,
  \begin{itemize}
  \item
    $(X_0,T_0(X_0),T_0)\prec (X,Y,T)$,
  \item
    $(X_0,T_0(X_0),T_0)\prec (\Hat{X},\Hat{Y},\Hat{T})$.
  \end{itemize}
\end{theorem}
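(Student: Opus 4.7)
Build $(\hat X,\hat Y,\hat T)$ as the union of a continuous $\cL_A$-elementary chain of separable structures $M_\alpha=(X_\alpha,Y_\alpha,T_\alpha)$, $\alpha<\omega_1$, constructed by transfinite induction with Corollary~\ref{cor:omitting types} applied at each successor step. The inductive invariants are: $M_\alpha$ is separable, $Y^{M_\alpha}=Y_0$ for a fixed separable $Y_0$, and distinguished elements $z_\beta\in\ker T\cap X_\alpha$ ($\beta<\alpha$) have norm~$1$ and pairwise distance $\geq 1$. Once the chain is built, $\hat M=\bigcup_{\alpha<\omega_1}M_\alpha$ satisfies $\hat Y=Y_0$ separable, $\hat X$ of density character $\omega_1$ via the sequence $(z_\alpha)_{\alpha<\omega_1}$, $(X,Y,T)\equiv\hat M$ by the elementary chain theorem, and $\hat T$ is surjective by Proposition~\ref{P:surjectivity}.

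The base $M_0\prec(X,Y,T)$ containing $D$ is a separable elementary substructure, obtainable from Corollary~\ref{cor:omitting types} with $\cM=\{(X,Y,T)\}$ (expanded by constants naming $D$) and no types to omit; this $M_0$ simultaneously witnesses the ``furthermore'' clause. For the successor step at $\alpha$, expand $\cL$ to $\cL^*_\alpha$ by constants naming dense sequences of $X_\alpha$ and $Y_0$, the $z_\beta$ for $\beta<\alpha$, and a fresh constant $z_\alpha$; let $\cM_\alpha$ be the class of $\cL^*_\alpha$-structures whose $\cL$-reducts elementarily extend $M_\alpha$ and in which $z_\alpha$ has norm $1$, lies in $\ker T$, and has distance $\geq 1$ from each $z_\beta$. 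Nonemptiness of $\cM_\alpha$ comes from the fact that $\dim\ker T=\infty$ is an elementary-equivalence invariant (a countable conjunction of the finitary statements $\dim\ker T\geq k$), combined with Lemmas~\ref{L:Hahn-Banach} and~\ref{L:sequence} applied to $\ker T\cap X_\alpha$. Apply Corollary~\ref{cor:omitting types} to $\mathcal{P}(\cM_\alpha,\Sigma)$ for a suitable countable fragment $\Sigma$, forcing to $0$ the single $\sup\bigwedge\inf$-formula
\[
\phi \;=\; \sup_y\,\bigwedge_{k<\omega}\,\inf_z\Bigl(d(y,z)\,\dotplus\bigl(\textstyle\bigvee_n d(z,c^Y_n)\,\dotminus\,1/k\bigr)\Bigr),
\]
where $(c^Y_n)$ names a dense sequence of $Y_0$ and the outer $\sup$ ranges over the $Y$-sort. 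In any model this formula evaluates to $\sup_y d(y,Y_0)$, so $\phi^{M_{\alpha+1}}\leq 0$ forces $Y^{M_{\alpha+1}}\subseteq\overline{Y_0}=Y_0$, with equality following from $\cL_A$-elementarity. At a limit $\lambda<\omega_1$, set $M_\lambda=\bigcup_{\alpha<\lambda}M_\alpha$, which is separable.

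\textbf{Main obstacle.} The crux is verifying the hypothesis of Corollary~\ref{cor:omitting types}: for every finite $\Sigma$-piece $p$ of $\cM_\alpha$, every constant $c\in C$ of $Y$-sort, and every $\varepsilon>0$, the set $p\cup\{\psi(c)<\varepsilon\}$ (with $\psi$ the inner $\bigwedge\inf$-part of $\phi$) is satisfiable in $\cM_\alpha$. Given a witness $N\in\cM_\alpha$ for $p$, one packages the constraints of $p$ together with the full defining properties of $z_\alpha^N$---including the countable conjunction $\bigwedge_{\beta<\alpha}\|z_\alpha-z_\beta\|\geq 1$---into a single $\cL_A$-formula with parameters in $M_\alpha$, by existentially quantifying out the $C$-constants and $z_\alpha^N$; the $\cL_A$-elementarity $M_\alpha\prec_{\cL_A}N$ (the natural invariant in this infinitary setting) then transfers the formula back to $M_\alpha$, producing a witness with $c\in Y^{M_\alpha}=Y_0$ and a fresh element realizing the $z_\alpha$-type inside $M_\alpha$. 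The structure $M_\alpha$ with $z_\alpha$ reinterpreted as this fresh realization lies in $\cM_\alpha$ and witnesses $p\cup\{\psi(c)=0<\varepsilon\}$. The delicate point is maintaining $\cL_A$-elementarity (rather than merely finitary elementarity) throughout the iteration, which must be engineered at each application of the forcing framework so that the existentially-closed infinitary formula actually descends; this is where the countable cofinality of the induction and the specific shape of $\phi$ play together.
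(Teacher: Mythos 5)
Your overall architecture (length-$\omega_1$ elementary chain of separable structures, omitting types at each successor step to pin down the $Y$-sort, a separated sequence in the kernel to push the density character of $\Hat X$ up to $\omega_1$, Hahn--Banach for the new separated vector, and Proposition~\ref{P:surjectivity} at the end) is the same as the paper's. But the step you yourself flag as ``the main obstacle'' is a genuine gap, and the mechanism you propose for it does not work. You put the \emph{entire} countable separation type of $z_\alpha$ (norm $1$, in $\ker T$, distance $\geq 1$ from every $z_\beta$, $\beta<\alpha$) into the definition of the class $\cM_\alpha$, and then, to verify the hypothesis of Corollary~\ref{cor:omitting types}, you need to descend an existentially quantified countable conjunction from a witness $N$ back into $M_\alpha$. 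That descent requires $M_\alpha\prec_{\cL_A}N$ for a fragment containing that infinitary formula, but your construction only delivers finitary elementarity (that is all the elementary-diagram conditions give, and all that the final conclusion needs); the forcing machinery only bounds $\sup\bigwedge\inf$-formulas from above and gives no control of infinitary formulas in both directions. Moreover, even granting the descent, a realization of the full separation type need not exist \emph{inside} the separable space $M_\alpha$: when $\alpha$ is infinite, the closed span of $\{z_\beta\mid\beta<\alpha\}$ can exhaust $\ker T_\alpha$, so Lemma~\ref{L:Hahn-Banach} does not apply there. The paper avoids all of this by \emph{forcing} the separation conditions themselves, as finitary approximations $\lnot\half\|c_0^*-c_n\|<\half+\varepsilon$ taken one at a time: each consistency check then involves only finitely many separation constraints, which transfer to (a nonseparable elementary extension such as an ultrapower of) $M_\alpha$ by finitary elementarity and are realized there by Hahn--Banach applied to a finite-dimensional, hence proper, subspace of the kernel.

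A second, independent problem is your choice of $\phi$. Its matrix contains the infinitary formula $\bigvee_n d(z,c^Y_n)\dotminus 1/k$, so for $\phi$ to be a $\sup\bigwedge\inf$-formula over $\Sigma$ you must put that formula (and, by closure under subformulas, $\bigwedge_n\lnot d(z,c^Y_n)$) into $\Sigma$. Then the singleton $p=\{\,\bigwedge_n\lnot d(c,c^Y_n)<1/2\,\}$ is a finite $\Sigma$-piece of $\cM_\alpha$ --- it asserts that $c$ is uniformly bounded away from $\{c^Y_n\}$, and is satisfiable in any member of $\cM_\alpha$ whose $Y$-sort properly contains $Y_0$, e.g.\ a nonprincipal ultrapower of $M_\alpha$ --- and $p\cup\{\psi(c)<\varepsilon\}$ is unsatisfiable for $\varepsilon<1/2$. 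So the hypothesis of Corollary~\ref{cor:omitting types} simply fails for your $\phi$. The cure is to keep $\Sigma=\cL_{\omega,\omega}$ and place the countable conjunction at the $\bigwedge_n$-level of the template with \emph{finitary} disjuncts, which is exactly what the paper's condition does: it names a separated sequence $(d_i)$ in $B_Y(1)$ and forces $\sup_x\bigwedge_{n,\bar r}\,(n+1)\cdot\frac1{n+1}\|T(x)-\sum_{i<n}r_id_i\|<\varepsilon$, so that surjectivity pins $\Hat Y$ to the (fixed, separable) closed span of the $d_i$. With finitary $\Sigma$ a finite piece can only constrain $c$ relative to finitely many named points and can never contradict the formula being forced, which is precisely why the hypothesis of the corollary becomes checkable.
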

\begin{proof}
By the L\"owenheim-Skolem Theorem~\cite[page 47]{Henson-Iovino:2002}, there exists a separable subspace $X_0$ of $X$ such that  if $T_0$ denotes the restriction of $T$ to $X_0$ and $Y_0=T_0(X_0)$,
\[
(X_0,Y_0,T_0)\prec (X,Y,T).
\]
(Note that $X_0$ can be taken so that it contains any given countable subset of $X$.)

Let $A$ be a countable dense subset of $X_0$ and consider the structure
\[
(\,X_0,Y_0,T,a \mid a\in A\,).
\]
Let $\cL$ be the signature that results from expanding the signature
of $(\,X,Y,T,a \mid a\in A\,)$ with new constant symbols $c_0,c_1,\dots$
and $c_0^*$ of sort $B_X(1)$ as well as new constant symbols
$d_0,d_1,\dots$ and of sort $B_Y(1)$. 

Let us introduce some temporary terminology. If $\phi$ is an
$\cL$-sentence, an $\cL$-structure $M$ \emph{satisfies} the inequality
$\phi<r$ if $\phi^M<r$. An inequality $\phi<r$ will be called \emph{finitary}
if the formula $\phi$ is finitary.

Let $\Gamma$ consist of the following inequalities (the variable $x$ in~\ref{I:Surjectivity} is of sort $B_Y(1)$):

\begin{enumerate}
\item
  \label{item:ElemIneq}
  All the finitary inequalities satisfied by the structure
  $(\,X_0,Y_0,T,a \mid a\in A\,)$
\item
  $\lnot\|c_n\|<\epsilon$ (i.e., $\|c_n\| > 1-\varepsilon$) and $\lnot\|d_n\|<\epsilon$,
  for every $n<\omega$ and every rational $\epsilon>0$.
\item
  $\lnot\half\|c_m-c_n\| < \half + \epsilon$ (i.e., $\|c_n-c_m\| > 1-2\varepsilon$)
  and $\lnot\half\|d_m-d_n\|<\half+\epsilon$,
  for every pair $m,n$ with $m<n<\omega$ and every rational $\epsilon>0$.
\item
  $\lnot \|c_0^*\|<\epsilon$, for every rational $\epsilon>0$.
\item
  $\lnot\half\|c_0^*-c_n\|<\half+\epsilon$, for every $n<\omega$ and every rational $\epsilon>0$.
\item
  $\|T(c_\omega)\|<\epsilon$, for every rational $\epsilon>0$.
\item
  \label{I:Surjectivity}
  For every rational $\epsilon>0$, the inequality
  \[
  \sup_x \left(
    \bigwedge_{\substack{r_0,\dots, r_{n-1}\in\mathbb{Q}\cap[-1,1]\\ n<\omega}}(n+1)\cdot\frac{1}{n+1}\| T(x)-\sum_{i<n} r_id_i\|\right)<\epsilon.
  \]
  Here $\frac{1}{n+1}\|\cdot\|$ is just the normalized norm predicate on the
  sort of $ T(x)-\sum_{i<n} r_id_i $, and
  $(n+1)\cdot\varphi$ is defined in general as $\varphi \dotplus \cdots \dotplus \varphi$ $n+1$
  times.
\end{enumerate}

Lemma~\ref{L:sequence} ensures that the hypotheses of \fref{cor:omitting types}
are satisfied with $\Sigma = \cL_{\omega,\omega}$.
Thus
by \fref{cor:omitting types}, $\Gamma$ has a separable model
\[
(\,X_1,Y_1,T_1, a, a_n, b_n, a_0^*\mid a\in A, n<\omega\,),
\]
where for each $n<\omega$, $a_n$ is the interpretation of $c_n$,  $b_n$ is
the interpretation of $d_n$, and $a_0^*$ is the interpretation of
$c_0^*$.
By \fref{item:ElemIneq}, we have
\[
(\,X, Y,T)\prec
(\,X_1, Y_1,T_1),
\]
 so, in particular, by Proposition~\ref{P:surjectivity}, $T_1$ is surjective.

Now we iterate the preceding process to find for each ordinal $\alpha$ with $0<\alpha<\omega_1$ a separable structure
\[
(\,X_\alpha, Y_\alpha,T_\alpha, a_n, b_n, a_i^* \mid n<\omega, i<\alpha\,)
\]
such that if $0<\alpha<\beta<\omega_1$,
\begin{itemize}
\item
$(\,X_\alpha, Y_\alpha,T_\alpha, a_n, b_n, a_i^*  \mid n<\omega, i<\alpha\,)\prec
(\,X_\beta, Y_\beta,T_\beta, a_n, b_n, a_i^*  \mid n<\omega, i<\alpha\,)$
\item
$ a_i^*\in X_\alpha$ for $i<\alpha$
\item
$\| a_i^*\|=1$ and $\| a_i^*- a_j^*\|=1$ for $i<j<\alpha$
\item
The linear span of $\{b_n\mid n<\omega\}$ is a dense subset of $T_\alpha(X_\alpha)$.
\end{itemize}
The theorem then follows by taking $\Hat{X}=\bigcup_{\alpha<\omega_1} X_\alpha$,
$\Hat{Y}=\bigcup_{\alpha<\omega_1} Y_\alpha$, and 
 $\Hat{T}=\bigcup_{\alpha<\omega_1} T_\alpha$.
\end{proof}

\newcommand{\etalchar}[1]{$^{#1}$}
\providecommand{\bysame}{\leavevmode\hbox to3em{\hrulefill}\thinspace}
\providecommand{\MR}{\relax\ifhmode\unskip\space\fi MR }
\providecommand{\MRhref}[2]{%
  \href{http://www.ams.org/mathscinet-getitem?mr=#1}{#2}
}
\providecommand{\href}[2]{#2}

\end{document}